\numberwithin{equation}{section}
\let\Im=\undefined
\let\mod=\undefined
\DeclareMathOperator{\GL}{GL}
\DeclareMathOperator{\Id}{Id}
\DeclareMathOperator{\Im}{Im}
\DeclareMathOperator{\End}{End}
\DeclareMathOperator{\ext}{ext}
\DeclareMathOperator{\Ext}{Ext}
\DeclareMathOperator{\Hom}{Hom}
\DeclareMathOperator{\mod}{mod}
\DeclareMathOperator{\idim}{idim}
\DeclareMathOperator{\pdim}{pdim}
\DeclareMathOperator{\rank}{rank}
\DeclareMathOperator{\gldim}{gldim}
\DeclareMathOperator{\bdim}{\mathbf{dim}}
\newcommand{\bd}{\mathbf{d}}
\newcommand{\bh}{\mathbf{h}}
\newcommand{\bone}{\mathbf{1}}
\newcommand{\bbA}{\mathbb{A}}
\newcommand{\bbB}{\mathbb{B}}
\newcommand{\bbM}{\mathbb{M}}
\newcommand{\bbN}{\mathbb{N}}
\newcommand{\bbP}{\mathbb{P}}
\newcommand{\bbX}{\mathbb{X}}
\newcommand{\bbV}{\mathbb{V}}
\newcommand{\bbZ}{\mathbb{Z}}
\newcommand{\calC}{\mathcal{C}}
\newcommand{\calE}{\mathcal{E}}
\newcommand{\calL}{\mathcal{L}}
\newcommand{\calO}{\mathcal{O}}
\newcommand{\calP}{\mathcal{P}}
\newcommand{\calR}{\mathcal{R}}
\newcommand{\calT}{\mathcal{T}}
\newcommand{\calU}{\mathcal{U}}
\newcommand{\calV}{\mathcal{V}}
\newcommand{\calX}{\mathcal{X}}
\newcommand{\calZ}{\mathcal{Z}}
\newcommand{\frakR}{\mathfrak{R}}
\newcommand{\ol}{\overline}
\newtheorem{maintheorem}{Theorem}
\newcounter{claim}
\numberwithin{claim}{section}
\newtheorem{corollary}[claim]{Corollary}
\newtheorem{lemma}[claim]{Lemma}
\newtheorem{proposition}[claim]{Proposition}
\newtheorem{theorem}[claim]{Theorem}
\newtheorem{step}{Step}
\title[The variety of periodic modules]{\makebox[0pt]{The closure of the set of periodic modules} \\ over a concealed canonical algebra \\ is regular in codimension one}
\author{Grzegorz Bobi\'nski}
\author{Grzegorz Zwara}
\address{Faculty of Mathematics and Computer Science \\ Nicolaus Copernicus University \\ ul.~Chopina 12/18 \\ 87-100 Toru\'n \\ Poland}
\email{gregbob@mat.umk.pl}
\email{gzwara@mat.umk.pl}
\keywords{concealed canonical algebra, module variety, regularity in codimension one, periodic module}
\subjclass[2010]{16G20, 14L30, 14B05}
\begin{document}

\begin{abstract}
Let $\Lambda$ be a concealed canonical algebra and $\bd$ the dimension vector of a $\Lambda$-module which is periodic respect to the action of the Auslander--Reiten translation $\tau$. In the paper, we investigate the union of the closures of the orbits of the $\tau$-periodic $\Lambda$-modules of dimension
vector $\bd$. We show that this set is closed and regular in codimension one.
\end{abstract}

\maketitle

\section*{Introduction and the main result}

Throughout the paper $k$ is a fixed algebraically closed field. By $\bbZ$, $\bbN$, and $\bbN_+$ we denote the sets of integers, nonnegative integers, and positive integers, respectively. If $i, j \in \bbZ$, then $[i, j]$ denotes the set of all $l \in \bbZ$ such that $i \leq l \leq j$.

For a finite dimensional $k$-algebra $\Lambda$ and a nonnegative element $\bd$ of the Grothendieck group, one defines a variety $\mod_\Lambda^\bd (k)$ (see Section~\ref{sect geometric} for details), whose points parameterize the $\Lambda$-modules with dimension vector $\bd$ (i.e.\ modules whose Jordan--H\"older composition factors are given by $\bd$). If $M$ is $\Lambda$-module of dimension vector $\bd$, then we denote by $\calO_M$ the set of points of $\mod_\Lambda^\bd (k)$, which correspond to modules isomorphic to $M$.

The variety $\mod_\Lambda^\bd (k)$ is in general reducible. However, if $\Lambda$ is triangular (there is no sequence $P_0 \to P_1 \to \cdots \to P_n$ of nonzero non\-isomorphisms between indecomposable projective $\Lambda$-modules such that $n > 0$ and $P_0 \simeq P_n$), then the points corresponding to modules with projective dimension at most $1$ form an open and irreducible (if nonempty) subset $\calP (\bd)$ of $\mod_\Lambda^\bd (k)$ (\cite{BarotSchroer}*{Proposition~3.1}). Consequently, if $\calP (\bd) \neq \emptyset$, then $\ol{\calP (\bd)}$ is an irreducible component of $\mod_\Lambda (\bd)$.

Our ongoing aim is to study components of the form $\ol{\calP (\bd)}$. We expect they are normal (\cite{BobinskiZwara} provides an evidence for this belief) or at least regular in codimension one. There are two sources of such components. First, if $M$ is a directing (see Subsection~\ref{sub directing} for the definition) $\Lambda$-module of dimension vector $\bd$, then $\ol{\calO_M} = \ol{\calP (\bd)}$. The components of this form were studied in~\cite{Bobinski2009} and the main result of~\cite{Bobinski2009} states they are regular in codimension one.

We describe now another class of such components. An important role in the representation theory of algebras is played by the Auslander--Reiten translation $\tau = \tau_\Lambda$, which is an operation on the $\Lambda$-modules (see for example~\cite{AssemSimsonSkowronski}*{Chapter~IV} for the definition). A $\Lambda$-module $M$ is called $\tau$-periodic, if $M \simeq \tau^n M$ for some $n \in \bbN_+$. The algebra $\Lambda$ is called concealed canonical if there exists a tilting line bundle $T$ over a weighted projective line such that $\Lambda$ is isomorphic to the opposite algebra of $\End (T)$. If $\Lambda$ is a concealed canonical algebra, then every $\tau$-periodic $\Lambda$-module has projective dimension at most $1$. Moreover, if $\calU$ is the set of points of $\mod_\Lambda^\bd (k)$ corresponding to the $\tau$-periodic modules and $\calU \neq \emptyset$, then $\ol{\calU} = \ol{\calP (\bd)}$. Geometry of this set and related problems (including study of semiinvariants) are objects of interest, especially in the case of Ringel's canonical algebras~\cite{Ringel1984} -- see~\cites{BarotSchroer, BobinskiSkowronski, Bobinski2008, Bobinski2015, DomokosLenzing2000, DomokosLenzing2002, SkowronskiWeyman} for some results. The following theorem is the main result of the paper.

\begin{maintheorem} \label{main theorem}
Let $\Lambda$ be a concealed canonical algebra and $\bd$ a dimension vector. If
\[
\calU := \{ M \in \mod_\Lambda^\bd (k) : \text{$M$ is $\tau$-periodic} \},
\]
then $\ol{\calU}$ is regular in codimension one.
\end{maintheorem}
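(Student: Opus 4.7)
My plan is to apply Voigt's tangent space formula
\[
\dim T_M \mod_\Lambda^\bd (k) = \dim \calO_M + \dim \Ext^1_\Lambda (M, M)
\]
and reduce regularity in codimension one to a statement about individual orbits. Because the singular locus of $\overline{\calU}$ is closed and $\GL_\bd$-invariant, hence a union of orbit closures, it suffices to show that every $M \in \overline{\calU}$ whose $\GL_\bd$-orbit has codimension at most one in $\overline{\calU}$ satisfies $\dim T_M \overline{\calU} = \dim \overline{\calU}$.

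The open dense subset $\calP(\bd) \subseteq \overline{\calU}$ is the easy part. Every $M \in \calP(\bd)$ has $\pdim M \leq 1$, so the Euler form of $\Lambda$ reduces to
\[
\langle \bd, \bd \rangle = \dim \End(M) - \dim \Ext^1_\Lambda (M, M),
\]
and Voigt's formula becomes $\dim T_M \mod_\Lambda^\bd (k) = \dim \GL_\bd - \langle \bd, \bd \rangle$, constant on $\calP(\bd)$. Since the generic point of the irreducible variety $\overline{\calU}$ lies in $\calP(\bd)$ and is smooth (and then also smooth in $\mod_\Lambda^\bd (k)$ by irreducibility of the component through it), this common value equals $\dim \overline{\calU}$, and hence every $M \in \calP(\bd)$ is already a smooth point of $\mod_\Lambda^\bd (k)$, and a fortiori of $\overline{\calU}$.

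The substantive work concentrates on the closed boundary $\overline{\calU} \setminus \calP(\bd)$, whose points $M$ have $\pdim M = 2$. Voigt's formula there overshoots by $\dim \Ext^2_\Lambda (M, M)$,
\[
\dim T_M \mod_\Lambda^\bd (k) = \dim \overline{\calU} + \dim \Ext^2_\Lambda (M, M),
\]
so the ambient module variety is itself singular at $M$. Using the tilting equivalence of $\mod \Lambda$ with coherent sheaves on the weighted projective line $\bbX$, under which $\calU$ becomes the locus of torsion sheaves of the prescribed class, I would classify the modules $M \in \overline{\calU} \setminus \calP(\bd)$ whose orbit has codimension at most one in $\overline{\calU}$. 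For each such $M$ the goal is to establish $\dim T_M \overline{\calU} = \dim \overline{\calU}$, either by producing enough independent tangent vectors to $\overline{\calU}$ at $M$ from explicit degenerations of $\tau$-periodic modules to $M$, or by identifying the $\dim \Ext^2_\Lambda (M, M)$ excess directions in $T_M \mod_\Lambda^\bd (k)$ with tangent vectors to other irreducible components of $\mod_\Lambda^\bd (k)$ meeting $\overline{\calU}$ at $M$.

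The principal obstacle is this boundary analysis. Because $\mod_\Lambda^\bd (k)$ is itself singular at $M \in \overline{\calU} \setminus \calP(\bd)$, the tangent space $T_M \overline{\calU}$ cannot be controlled through Voigt alone, and one must work either with explicit defining equations of $\overline{\calU}$ near $M$, or with explicit families of $\tau$-periodic modules smoothing to $M$. I expect the classification of codimension-one boundary orbits to be finite and driven by the separating tubular structure of the $\tau$-periodic component together with Riedtmann--Zwara degeneration, and the required deformations to be produced from short exact sequences of torsion sheaves inside the tubular family.
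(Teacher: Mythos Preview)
Your treatment of the open part $\calP(\bd)$ is correct and matches the paper's argument. The gap is entirely in the boundary analysis. To show that a point $N$ is nonsingular on $\overline{\calU}$ you need an \emph{upper} bound $\dim_k T_N \overline{\calU} \leq \dim \overline{\calU}$; the lower bound is automatic for a reduced scheme. Producing tangent vectors from explicit degenerations only yields lower bounds, so that route cannot close the argument. Your alternative---identifying the $\dim \Ext^2_\Lambda(N,N)$ excess directions in $T_N \mod_\Lambda^\bd$ with tangent vectors to other irreducible components---has no a priori justification: the tangent space of a reducible (and possibly non-reduced) scheme at a point is not in general the sum of the tangent spaces of its components, and separating them would again require defining equations for $\overline{\calU}$, which you do not have. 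You also propose a classification of codimension-one boundary orbits; the paper needs no such classification.

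The paper avoids computing $T_N \overline{\calU}$ directly. The structural input is a canonical decomposition $N \simeq U \oplus V$ with $U$ in a ``left'' subcategory $\calL$ (containing the $\tau$-periodic modules) and $V$ in a ``right'' subcategory $\calR$, satisfying $\Hom_\Lambda(V,U)=0$, $\Ext^1_\Lambda(U,V)=0$, $\pdim_\Lambda U\le 1$, $\idim_\Lambda V\le 1$; this comes from the separating tubular family and holds for every boundary point uniformly. The vanishing $\Ext^1_\Lambda(U,V)=0$ makes the action map $\GL_\bd \times (\overline{\calU}\cap\calE_\Lambda^{\bd_2,\bd_1}) \to \overline{\calU}$ smooth at $(\Id,N)$, where $\calE_\Lambda^{\bd_2,\bd_1}$ is the locus of block upper-triangular representations; in particular this intersection is \emph{reduced} at $N$, so its Zariski tangent space can be bounded. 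One then traps $T_N(\overline{\calU}\cap\calE_\Lambda^{\bd_2,\bd_1})$ inside $T_N\calE_{\Lambda,h,e}^{\bd_2,\bd_1}$, the sublocus where $[V',U']_\Lambda$ and $[V',U']_\Lambda^1$ are constant. The decisive step is the existence of a short exact sequence $0\to U\to M\to V\to 0$ with $M\in\calU$ (hence $\pdim_\Lambda M\le 1$), proved by a curve-lifting argument in the spirit of Zwara's degeneration theory: this forces the Yoneda map $\Ext^1_\Lambda(U,U)\to\Ext^2_\Lambda(V,U)=\Ext^2_\Lambda(N,N)$ to be surjective, which is exactly the $\dim\Ext^2_\Lambda(N,N)$ correction needed to bring $\dim_k T_N\overline{\calU}$ down to $a_\Lambda(\bd)=\dim\overline{\calU}$. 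None of these ingredients---the $(\calL,\calR)$ decomposition, the smooth slice via $\calE_\Lambda^{\bd_2,\bd_1}$, or the exact sequence with middle term in $\calU$---appears in your proposal, and without them there is no mechanism to obtain the required upper bound.
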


According to famous Drozd's Wild--Tame Dichotomy Thoerem~\cite{Drozd} (see also~\cite{CrawleyBoevey}) the algebras can be divided into two disjoint classes. One class consists of the tame algebras for which the indecomposable modules occur, in each dimension, in a finite number of discrete and a finite number of one-parameter families. The second class is formed by the wild algebras whose representation theory is as complicated as the study of finite dimensional vector spaces together with two (noncommuting) endomorphisms, for which the classification up to isomorphism is a well-known unsolved problem.

If $\Lambda$ is a tame concealed canonical algebra and $\bd$ a dimension vector of a $\tau$-periodic $\Lambda$-module, then Theorem~\ref{main theorem} is a consequence of~\cite{BobinskiSkowronski}*{Theorem~1}. More precisely, if this is the case, then $\ol{\calU} = \mod_\Lambda^\bd (k)$ and $\mod_\Lambda^\bd (k)$ is normal. Moreover, results of~\cite{BobinskiSkowronski}*{Section~3} imply, that there exists a convex subalgebra $\Lambda'$ of $\Lambda$, which is also concealed canonical, such that the support algebra of $\bd$ is contained in $\Lambda'$ and
\[
\mod_\Lambda^\bd (k) = \bigcup_{M \in \calU'} \ol{\calO}_M,
\]
where
\[
\calU' := \{ M \in \mod_\Lambda^\bd (k) : \text{$M$ is $\tau_{\Lambda'}$-periodic} \}.
\]
Note that in general $\tau_{\Lambda'}$ and $\tau_\Lambda$ differ, hence $\calU' \neq \calU$.

In the course of the proof of Theorem~\ref{main theorem} we obtain the following analogue of the above result in the case of wild concealed canonical algebras, which seems to be of interest on its own.

\begin{maintheorem} \label{main theorem prim}
Let $\Lambda$ be a wild concealed canonical algebra and $\bd$ a dimension vector. If
\[
\calU := \{ M \in \mod_\Lambda^\bd (k) : \text{$M$ is $\tau$-periodic} \},
\]
then
\[
\ol{\calU} = \bigcup_{M \in \calU} \ol{\calO}_M.
\]
\end{maintheorem}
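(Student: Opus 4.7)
The inclusion $\bigcup_{M \in \calU} \ol{\calO}_M \subseteq \ol{\calU}$ is immediate, since each orbit closure $\ol{\calO}_M$ with $M \in \calU$ is a closed $\GL(\bd)$-invariant subset of $\ol{\calU}$. The substance lies in the reverse inclusion, and my plan is to reduce it to an assertion about the generic orbit.

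The introduction recalls that $\calP(\bd)$ is open and irreducible in $\mod_\Lambda^\bd(k)$, and that $\ol{\calU} = \ol{\calP(\bd)}$ whenever $\calU$ is nonempty. Consequently $\ol{\calU}$ is irreducible and supports a unique dense $\GL(\bd)$-orbit, say $\calO_{M_0}$, with $\ol{\calO_{M_0}} = \ol{\calU}$. If the generic representative $M_0$ is $\tau$-periodic, then $\ol{\calU} = \ol{\calO_{M_0}} \subseteq \bigcup_{M \in \calU} \ol{\calO}_M$, which completes the proof.

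To show that $M_0 \in \calU$, I would parametrize $\calU$ using the tubular family. For the concealed canonical algebra $\Lambda$ the indecomposable $\tau$-periodic modules lie in the tubes $\{\calT_\lambda\}_{\lambda \in \bbX}$ of the tubular family, and only finitely many tubes have rank exceeding one. A periodic module of dimension vector $\bd$ decomposes into its tube components, and these decompositions fall into finitely many combinatorial types; each type is parametrized by a product of a symmetric power of (a stratum of) the weighted projective line $\bbX$ with finite choices at the exceptional tubes. Equipping these parameters with framings yields a morphism $\Phi \colon Y \times \GL(\bd) \to \mod_\Lambda^\bd(k)$ from a finite disjoint union of irreducible varieties $Y$, whose image is exactly $\calU$. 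By Chevalley's theorem $\calU$ is then constructible; since a dense constructible subset of an irreducible variety contains a nonempty open subset, $\calU$ contains an open dense subset of $\ol{\calU}$, and therefore $M_0 \in \calU$.

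The main obstacle is that this strategy does not visibly exploit wildness, whereas the theorem is stated only in the wild case. Since the paper handles the tame case via the stronger normality result of~\cite{BobinskiSkowronski}, the authors presumably obtain Theorem~\ref{main theorem prim} internally to the proof of Theorem~\ref{main theorem} by an argument exploiting a wild-specific input---plausibly the absence of certain extensions between postprojective and preinjective modules that would otherwise allow the generic module of $\calP(\bd)$ to fail $\tau$-periodicity. Carrying out the above plan rigorously would in any case require making the parametrization of $\calU$ precise, in particular identifying regular $\Lambda$-modules with torsion sheaves on the weighted projective line $\bbX$, and checking that the resulting parameter spaces are as claimed.
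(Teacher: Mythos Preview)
Your argument has a fatal gap: the assertion that $\ol{\calU}$ ``supports a unique dense $\GL(\bd)$-orbit'' is false. For concealed canonical algebras the generic $\tau$-periodic module of dimension vector $n\bh$ (with $\bh$ the rank vector) is a direct sum $H_{x_1}\oplus\cdots\oplus H_{x_n}$ of simple regulars from $n$ distinct homogeneous tubes; varying the points $x_i$ gives a one-parameter family (at least) of pairwise non-isomorphic modules, so no single orbit is dense in $\ol{\calU}$. Your subsequent work---parametrizing $\calU$ by tube data and invoking Chevalley to see that $\calU$ is constructible, hence contains an open dense subset of $\ol{\calU}$---is correct in spirit (and in fact the paper records that $\calU=\calL(\bd)$ is already open), but it only recovers the points of an open dense subset, not the boundary $\ol{\calU}\setminus\calU$. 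Without a dense orbit there is no single $\ol{\calO}_{M_0}$ that swallows the whole closure, and the inclusion $\ol{\calU}\subseteq\bigcup_{M\in\calU}\ol{\calO}_M$ remains unproved.

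The paper's route is entirely different and does not pass through any dense-orbit statement. It first sets up a geometric bisection $(\calL,\calR)$ of $\mod\Lambda$ determined by the mouth modules $H_x$ of the tubular family, and observes that in the wild case $\calU=\calL(\bd)$ whenever $\calU\neq\emptyset$. The heart of the argument is then Proposition~\ref{prop main}: given $N\in\ol{\calU}\setminus\calU$ with $\calO_N$ maximal in the boundary, one writes $N\simeq U\oplus V$ with $U\in\calL$, $V\in\calR$, and produces a short exact sequence $0\to U\to M\to V\to 0$ with $M\in\calU$ by a careful deformation-theoretic construction over a discrete valuation ring (iteratively killing the off-diagonal blocks of a one-parameter family and invoking the vanishing $\Ext^1_\Lambda(U,V)=0$). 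Since extensions give degenerations, $\calO_N\subseteq\ol{\calO}_M$, and every boundary orbit lies under some orbit from $\calU$. The wildness hypothesis enters only in the identification $\calU=\calL(\bd)$; the degeneration machinery itself is insensitive to tame versus wild.
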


The paper is organized as follows. In Section~\ref{sect quivers} we present preliminaries on quivers and their representations. Next, in Section~\ref{sect geometric} we define geometric objects of interest and their variants, which play a role in the proof. Sections~\ref{sect nonsingularity} and~\ref{sect sequence} are crucial: the former contains a nonsingularity criterion which we use in the proof, while the latter shows existence of an exact sequence, which is necessary in order to apply the nonsingularity criterion. Section~\ref{section main result} contains more general versions of  Theorems~\ref{main theorem} and~\ref{main theorem prim} and in final Section~\ref{sect applications} we explain how these versions imply Theorems~\ref{main theorem} and~\ref{main theorem prim}. Moreover, in Section~\ref{sect applications} we correct some arguments from~\cite{Bobinski2009} and~\cite{Bobinski2012}.

The authors acknowledge the support of National Science Center grant no.\ 2015/17/B/ST1/01731.

\section{Preliminaries on quivers and their representations} \label{sect quivers}

By a quiver $\Delta$ we mean a finite set $\Delta_0$ of vertices and a finite set $\Delta_1$ of arrows together with two maps $s, t \colon \Delta_1 \to \Delta_0$, which assign to $\alpha \in \Delta_1$ the starting vertex $s \alpha$ and the terminating vertex $t \alpha$, respectively. If $l \in \bbN_+$, then by a path in $\Delta$ of length $l$ we mean every sequence $\sigma = \alpha_1 \cdots \alpha_l$ such that $\alpha_i \in \Delta_1$, for each $i \in [1, l]$, and $s \alpha_i = t \alpha_{i + 1}$, for each $i \in [1, l - 1]$. In the above situation we put $s \sigma := s \alpha_l$ and $t \sigma := t \alpha_1$. Moreover, for each $x \in \Delta_0$, we introduce the path $\bone_x$ in $\Delta$ of length $0$ such that $s \bone_x := x =: t \bone_x$. We denote the length of a path $\sigma$ in $\Delta$ by $\ell (\sigma)$. If $\sigma'$ and $\sigma''$ are two paths in $\Delta$ such that $s \sigma' = t \sigma''$, then we define the composition $\sigma' \sigma''$ of $\sigma'$ and $\sigma''$, which is a path in $\Delta$ of length $\ell (\sigma') + \ell (\sigma'')$, in the obvious way (in particular, $\sigma \bone_{s \sigma} = \sigma = \bone_{t \sigma} \sigma$, for each path $\sigma$).

With a quiver $\Delta$ we associate its path algebra $k \Delta$, which as a $k$-vector space has a basis formed by all paths in $\Delta$ and whose multiplication is induced by the composition of paths. If $\rho \in \bone_x (k \Delta) \bone_y$, for $x, y \in \Delta_0$, then we put $s_\rho := y$ and $t_\rho := x$. If additionally $\rho = \lambda_1 \sigma_1 + \cdots + \lambda_n \sigma_n$, for $\lambda_1, \ldots, \lambda_n \in k$ and paths $\sigma_1$, \ldots, $\sigma_n$ such that $\ell (\sigma_i) > 1$, for each $i \in [1, n]$, then we call $\rho$ a relation. A set $\frakR$ of relations is called minimal if, for every $\rho \in \frakR$, $\rho$ does not belong to the ideal $\langle \frakR \setminus \{ \rho \} \rangle$ of $k \Delta$ generated by $\frakR \setminus \{ \rho \}$. A pair $(\Delta, \frakR)$ consisting of a quiver $\Delta$ and a minimal set of relations $\frakR$, such that there exists $n \in \bbN_+$ with the property $\sigma \in \langle \frakR \rangle$, for each path $\sigma$ in $\Delta$ of length at least $n$, is called a bound quiver. If $(\Delta, \frakR)$ is a bound quiver, then the algebra $k \Delta / \langle \frakR \rangle$ is called the path algebra of $(\Delta, \frakR)$.

For the rest of the section we fix the path algebra $\Lambda$ of a bound quiver $(\Delta, \frakR)$.

Let $R$ be a commutative $k$-algebra. An $R$-representation $M$ of $\Delta$ associates with each vertex $x \in \Delta_0$ a free $R$-module $M_x$ of finite rank and with each arrow $\alpha \in \Delta_1$ an $R$-linear map $M_\alpha : M_{s \alpha} \to M_{t \alpha}$. If $M$ is an $R$-representation of $\Delta$ and $\sigma = \alpha_1 \cdots \alpha_n$ is a path in $\Delta$ with $\alpha_1, \ldots, \alpha_n \in \Delta_1$, then we put
\[
M_\sigma := M_{\alpha_1} \cdots M_{\alpha_n}.
\]
Similarly, if $\rho = \lambda_1 \sigma_1 + \cdots + \lambda_n \sigma_n$ is a relation, for $\lambda_1, \ldots, \lambda_n \in k$ and paths $\sigma_1$, \ldots, $\sigma_n$, then
\[
M_\rho := \lambda_1 M_{\sigma_1} + \cdots + \lambda_n M_{\sigma_n}.
\]
An $R$-representation $M$ of $\Delta$ is called an $R$-representation of $(\Delta, \frakR)$ if $M_\rho = 0$, for each $\rho \in \frakR$. By a morphism $f : M \to N$ of $R$-representations we mean a collection $(f_x)_{x \in  \Delta_0}$ of $R$-linear maps $f_x : M_x \to N_x$, $x \in \Delta_0$, such that $f_{t \alpha} M_\alpha = N_\alpha f_{s \alpha}$, for each $\alpha \in \Delta_1$. The category of $R$-representations of $(\Delta, \frakR)$ is equivalent to the full subcategory $\mod_\Lambda (R)$ of the category of $\Lambda$-$R$-bimodules formed by the bimodules $M$ such that $M_x := \bone_x  M$ is a free $R$-module, for each $x \in \Delta_0$ (see for example~\cite{AssemSimsonSkowronski}*{Theorem~III.1.6} for this statement in the case $R = k$). We will identify such $\Lambda$-$R$-bimodules and the $R$-representations of $(\Delta, \frakR)$.

We write $\mod \Lambda$ for $\mod_\Lambda (k)$. For $M, N \in \mod \Lambda$, we denote by $[M, N]_\Lambda$ the $k$-dimension of $\Hom_\Lambda (M, N)$. Similarly, for $n \in \bbN_+$, we denote by $[M, N]_\Lambda^n$ the $k$-dimension of $\Ext_\Lambda^n (M, N)$. For $M \in \mod \Lambda$, we denote by $\bdim M$ the dimension vector of $M$, which is an element of $\bbN^{\Delta_0}$ such that, for $x \in \Delta_0$, $(\bdim M)_x$ is the $k$-dimension of $M_x$.

If $\gldim \Lambda < \infty$, then one defines the bilinear form $\langle -, - \rangle_\Lambda \colon \bbZ^{\Delta_0} \times \bbZ^{\Delta_0} \to \bbZ$ by the condition
\[
\langle \bdim M, \bdim N \rangle_\Lambda = \sum_{n \in \bbN} (-1)^n [M, N]_\Lambda^n,
\]
for all $\Lambda$-modules $M$ and $N$. We denote the associated quadratic form by $\chi_\Lambda$, i.e.\ $\chi_\Lambda (\bd) := \langle \bd, \bd \rangle_\Lambda$, for $\bd \in \bbZ^{\Delta_0}$. If additionally $\Lambda$ is triangular and $\gldim \Lambda \leq 2$, then
\[
\langle \bd_1, \bd_2 \rangle_\Lambda = \sum_{x \in \Delta_0} d_1 (x) d_2 (x) - \sum_{\alpha \in \Delta_1} d_1 (s \alpha) d_2 (t \alpha) + \sum_{\rho \in \frakR} d_1 (s \rho) d_2 (t \rho),
\]
for all $\bd_1, \bd_2 \in \bbZ^{\Delta_0}$ (see~\cite{Bongartz1983}*{Section~1}).

\section{Geometric preliminaries} \label{sect geometric}

Let $(\Delta, \frakR)$ be a bound quiver and $\Lambda$ its path algebra. We define geometric objects associated with $(\Delta, \frakR)$ and $\Lambda$.

If $\bd_1$ and $\bd_2$ are dimension vectors, then we define affine schemes $\bbV^{\bd_1, \bd_2}$ and $\bbA^{\bd_1, \bd_2}$ by
\[
\bbV^{\bd_1, \bd_2} := \prod_{x \in \Delta_0} \bbM_{d_2 (x), d_1 (x)}
\qquad
\text{and} \qquad
\bbA^{\bd_1, \bd_2} := \prod_{\alpha \in \Delta_1} \bbM_{d_2 (t \alpha), d_1 (s \alpha)},
\]
i.e.\ if $R$ is a commutative $k$-algebra, then
\begin{gather*}
\bbV^{\bd_1, \bd_2} (R) := \prod_{x \in \Delta_0} \bbM_{d_2 (x), d_1 (x)} (R)
\\
\intertext{and}
\bbA^{\bd_1, \bd_2} (R) := \prod_{\alpha \in \Delta_1} \bbM_{d_2 (t \alpha), d_1 (s \alpha)} (R).
\end{gather*}
If $R$ is a commutative $k$-algebra, $M \in \bbA^{\bd_1, \bd_2} (R)$ and $h \in \bbV^{\bd_2, \bd_3} (R)$, then we define $h \circ M \in \bbA^{\bd_1, \bd_3} (R)$ by
\[
(h \circ M)_\alpha := h_{t \alpha} M_\alpha \qquad (\alpha \in \Delta_1).
\]
Analogously, we define $M \circ h \in \bbA^{\bd_1, \bd_3} (R)$, for $h \in \bbV^{\bd_1, \bd_2} (R)$ and $M \in \bbA^{\bd_2, \bd_3} (R)$.

If $\bd$ is a dimension vector and $R$ is a commutative $k$-algebra, then we view $M \in \bbA^{\bd, \bd} (R)$ as an object of $\mod_{k \Delta} (R)$, where $M_x := R^{d (x)}$, for each $x \in \Delta_0$. Consequently, we define $\mod_\Lambda^\bd$ as the subscheme of $\bbA^{\bd, \bd}$ such that, for a commutative $k$-algebra $R$, $\mod_\Lambda^\bd (R)$ consists of $M \in \bbA^{\bd, \bd} (R)$ which are $R$-representations of $(\Delta, \frakR)$. Then $\mod_\Lambda^\bd$ is an affine scheme, which is called the scheme of $\Lambda$-modules of dimension vector $\bd$, as for each $N \in \mod \Lambda$ with dimension vector $\bd$, there exists $M \in \mod_\Lambda^\bd (k)$ such that $M \simeq N$. The scheme $\mod_\Lambda^\bd$ is described within $\bbA^{\bd, \bd}$ by $\sum_{\rho \in \frakR} d (s \rho) d (t \rho)$ equations. Consequently, if
\[
a_\Lambda (\bd) := \sum_{\alpha \in \Delta_1} d (s \alpha) d (t \alpha) - \sum_{\rho \in \frakR} d (s \rho) d (t \rho)
\]
and $\calZ$ is an irreducible component of $\mod_\Lambda^\bd (k)$, then $\dim \calZ \geq a_\Lambda (\bd)$. Note that
\begin{equation} \label{eq ad}
a_\Lambda (\bd) = \dim_k \bbV^{\bd, \bd} (k) - \chi_\Lambda (\bd),
\end{equation}
provided $\Lambda$ is triangular and $\gldim \Lambda \leq 2$.

For a dimension vector $\bd$, let $\GL_\bd$ be the affine group scheme defined by
\[
\GL_\bd := \prod_{i \in \Delta_0} \GL_{d_i}.
\]
Then $\GL_\bd$ acts on $\bbA^{\bd, \bd}$ by
\[
(g \ast M)_\alpha := g_{t \alpha} M_\alpha g_{s \alpha}^{-1},
\]
for $g \in \GL_\bd (R)$, $M \in \bbA^{\bd, \bd} (R)$, and $\alpha \in \Delta_1$, where $R$ is a commutative $k$-algebra. Note that $\mod_\Lambda^\bd$ is a $\GL_\bd$-invariant subscheme of $\bbA^{\bd, \bd}$. If $M \in \mod_\Lambda^\bd (k)$, then we denote by $\calO_M$ the $\GL_\bd (k)$-orbit of $M$ in $\mod_\Lambda^\bd (k)$, and by $\ol{\calO}_M$ the closure of $\calO_M$. If $\calZ$ is a $\GL_\bd (k)$-invariant subset of $\mod_\Lambda^\bd (k)$ and $M \in \calZ$, then we say that $\calO_M$ is maximal in $\calZ$ if there is no $N \in \calZ$ such that $\calO_M \subseteq \ol{\calO}_N$ and $\calO_M \neq \calO_N$. Note that $\calO_M = \calO_N$ if and only if $M \simeq N$. 

We present now an interpretation of the extension spaces, which will play an important role later. Let $R$ be a commutative $k$-algebra, $\bd_1$ and $\bd_2$ dimension vectors, $M \in \mod_\Lambda^{\bd_1} (R)$ and $N \in \mod_\Lambda^{\bd_2} (R)$, and fix $Z \in \bbA^{\bd_2, \bd_1} (R)$. If $\sigma = \alpha_1 \cdots \alpha_n$ is a path in $\Delta$ with $\alpha_1, \ldots, \alpha_n \in \Delta_1$, then we put
\[
Z_\sigma^{N, M} := \sum_{i = 1}^n M_{\alpha_1} \cdots M_{\alpha_{i - 1}} Z_{\alpha_i} N_{\alpha_{i + 1}} \cdots N_{\alpha_n}.
\]
Moreover, if $\rho = \lambda_1 \sigma_1 + \cdots + \lambda_n \sigma_n$ is a relation, for $\lambda_1, \ldots, \lambda_n \in k$ and paths $\sigma_1$, \ldots, $\sigma_n$, then
\[
Z_\rho^{N, M} := \lambda_1 Z_{\sigma_1}^{N, M} + \cdots + \lambda_n Z_{\sigma_n}^{N, M}.
\]
We denote by $\bbZ^{N, M}$ the set of $Z \in \bbA^{\bd_2, \bd_1} (R)$ such that $Z_\rho^{N, M}  = 0$, for all $\rho \in \frakR$. If $Z \in \bbZ^{N, M}$, then we define $W^Z \in \mod_\Lambda^{\bd_1 + \bd_2} (R)$ by
\[
W^Z :=
\begin{bmatrix}
M & Z
\\
0 & N
\end{bmatrix}
\qquad \left( \text{i.e. }
W^Z_\alpha :=
\begin{bmatrix}
M_\alpha & Z_\alpha
\\
0 & N_\alpha
\end{bmatrix}
\qquad (\alpha \in \Delta_1) \right).
\]
Then we have the canonical exact sequence
\[
\xi^Z \colon 0 \to M \to W^Z \to N \to 0.
\]
The assignment
\[
\bbZ^{N, M} \ni Z \mapsto [\xi^Z] \in \Ext_\Lambda^1 (N, M)
\]
is a linear epimorphism. We denote its kernel by $\bbB^{N, M}$. Then $Z \in \bbB^{N, M}$ if and only if there exists $h \in \bbV^{\bd_2, \bd_1} (R)$ such that
\[
Z = h \circ N - M \circ h.
\]
Consequently, we have exact sequences
\begin{gather}
\nonumber 0 \to \Hom_\Lambda (N, M) \to \bbV^{\bd_2, \bd_1} (R) \to \bbB^{N, M} \to 0
\\
\intertext{and}
0 \to \bbB^{N, M} \to \bbZ^{N, M} \to \Ext_\Lambda^1 (N, M) \to 0. \label{seqBZExt}
\end{gather}
In particular, if $M \in \mod_\Lambda^{\bd_1} (k)$ and $N \in \mod_\Lambda^{\bd_2} (k)$, then
\begin{gather}
\dim_k \bbB^{N, M} = \dim_k \bbV^{\bd_2, \bd_1} (k) - [N, M]_\Lambda \nonumber
\\
\intertext{and}
\dim_k \bbZ^{N, M} = \dim_k \bbV^{\bd_2, \bd_1} (k) - [N, M]_\Lambda + [N, M]_\Lambda^1. \label{eqdimZ}
\end{gather}

The next thing we need is a relationship of elements of $\mod_\Lambda^{\bd} (k [t] / t^n)$ with exact sequences. Fix $M \in \mod_\Lambda^\bd (k [t] / t^n)$. Put $U := M / t M \in \mod_\Lambda^{\bd} (k)$ and $V := M / t^{n - 1} M \in \mod_\Lambda^{\bd} (k [t] / t^{n - 1})$. If we write
\[
M = M_0 + t M_1 + \cdots + t^{n - 1} M_{n - 1},
\]
for $M_0, \ldots, M_{n - 1} \in \bbA^{\bd, \bd} (k)$, then $M_0 = U$ and $V = M_0 + t M_1 + \cdots + t^{n - 2} M_{n - 2}$. We may view $M$ and $V$ as $\Lambda$-modules of dimension vectors $n \bd$ and $(n - 1) \bd$, respectively. Then we have isomorphisms
\[
M \simeq
\begin{bmatrix}
U & M_1 & \cdots & M_{n - 1}
\\
0 & U & \ddots & \vdots
\\
\vdots & & \ddots & M_1
\\
0 & \cdots & 0 & U
\end{bmatrix}
\qquad \text{and} \qquad
V \simeq
\begin{bmatrix}
U & M_1 & \cdots & M_{n - 2}
\\
0 & U & \ddots & \vdots
\\
\vdots & & \ddots & M_1
\\
0 & \cdots & 0 & U
\end{bmatrix}
\]
of $\Lambda$-modules, and consequently exact sequences
\[
0 \to U \to M \to V \to 0 \qquad \text{and} \qquad 0 \to V \to M \to U \to 0.
\]

The above gives us an interpretation of the tangent spaces. Let $k [\varepsilon]$ be the algebra of dual numbers and $\pi \colon k [\varepsilon] \to k$ the canonical projection. Recall that if $M \in \mod_\Lambda^\bd (k)$, then the tangent space $T_M \mod_\Lambda^\bd$ is $(\mod_\Lambda^\bd (\pi))^{-1} (M)$. If $M + \varepsilon Z \in T_M \mod_\Lambda^\bd$, then the above shows that $Z \in \bbZ^{M, M}$, and we identify $T_M \mod_\Lambda^\bd$ with $\bbZ^{M, M}$. Under this identification, $\bbB^{M, M} = T_M \calO_M$ and~\eqref{seqBZExt} takes the form
\[
0 \to T_M \calO_M \to T_M \mod_\Lambda^\bd \to \Ext_\Lambda^1 (M, M) \to 0,
\]
which is the famous Voigt's result~\cite{Voigt}.

We finish this section with the following variant of the above constructions. Let $\bd_1$ and $\bd_2$ be dimension vectors and $\bd := \bd_1 + \bd_2$. Then we have a natural decomposition
\[
\bbA^{\bd, \bd} =
\begin{bmatrix}
\bbA^{\bd_1, \bd_1} & \bbA^{\bd_2, \bd_1}
\\
\bbA^{\bd_1, \bd_2} & \bbA^{\bd_2, \bd_2}
\end{bmatrix}.
\]
We define $\calE_\Lambda^{\bd_2, \bd_1}$ to be the intersection of $\mod_\Lambda^\bd$ and $\left[
\begin{smallmatrix}
\bbA^{\bd_1, \bd_1} & \bbA^{\bd_2, \bd_1}
\\
0 & \bbA^{\bd_2, \bd_2}
\end{smallmatrix}
\right]$. Consequently, if $R$ is a commutative $k$-algebra, then
\begin{multline*}
\calE_\Lambda^{\bd_2, \bd_1} (R) =
\biggl\{
\begin{bmatrix}
U & Z
\\
0 & V
\end{bmatrix} :
\\
\text{$U \in \mod_\Lambda^{\bd_1} (R)$, $V \in \mod_\Lambda^{\bd_2} (R)$, and $Z \in \bbZ^{V, U}$}  \biggr\}.
\end{multline*}
In particular, if $U \in \mod_\Lambda^{\bd_1} (k)$, $V \in \mod_\Lambda^{\bd_2} (k)$, and $N := U \oplus V$, where we identify $U \oplus V$ with the element $\left[
\begin{smallmatrix}
U & 0
\\
0 & V
\end{smallmatrix}
\right]$ of $\mod_\Lambda^\bd (k)$, then
\[
T_N \calE_\Lambda^{\bd_2, \bd_1} =
\begin{bmatrix}
\bbZ^{U, U} & \bbZ^{V, U}
\\
0 & \bbZ^{V, V}
\end{bmatrix}.
\]
If additionally $h, e \in \bbN$, then one defines (see~\cite{Bobinski2009} for details) the scheme $\calE_{\Lambda, h, e}^{\bd_2, \bd_1}$ such that
\[
\calE_{\Lambda, h, e}^{\bd_2, \bd_1} (k) =
\biggl\{
\begin{bmatrix}
U & Z
\\
0 & V
\end{bmatrix} \in \calE_\Lambda^{\bd_2, \bd_1} (k) : \text{$[V, U]_\Lambda = h$ and $[V, U]_\Lambda^1 = e$}  \biggr\}.
\]
Recall that if $Z \in \bbZ^{M, N}$, then $\xi^Z$ denotes the corresponding exact sequence. Moreover, if $\xi_1 \in \Ext_\Lambda^n (M, N)$ and $\xi_2 \in \Ext_\Lambda^m (N, L)$, then $\xi_2 \circ \xi_1 \in \Ext_\Lambda^{n + m} (M, L)$ is the Yoneda product. The following is a reformulation of~\cite{Bobinski2009}*{Propositions~3.2 and~3.3}.

\begin{proposition} \label{prop tangent}
Let $\bd_1$ and $\bd_2$ be dimension vectors, $U \in \mod_\Lambda^{\bd_1} (k)$, $V \in \mod_\Lambda^{\bd_2} (k)$, and $N := U \oplus V$. If
\[
h := \dim_k \Hom_\Lambda (V, U) \qquad \text{and} \qquad e := \dim_k \Ext_\Lambda^1 (V, U),
\]
and
\[
Z =
\begin{bmatrix}
Z_{1, 1} & Z_{2, 1}
\\
0 & Z_{2, 2}
\end{bmatrix} \in T_N \calE_\Lambda^{\bd_2, \bd_1},
\]
then $Z \in T_N \calE_{\Lambda, h, e}^{\bd_2, \bd_1}$ if and only if
\begin{enumerate}

\item
$[\xi^{Z_{1, 1}}] \circ f = f \circ [\xi^{Z_{2, 2}}]$, for each $f \in \Hom_\Lambda (V, U)$, and

\item
$[\xi^{Z_{1, 1}}] \circ \xi + \xi \circ [\xi^{Z_{2, 2}}] = 0$, for each $\xi \in \Ext_\Lambda^1 (V, U)$.

\end{enumerate}
\end{proposition}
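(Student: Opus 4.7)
The proposition is tagged as a reformulation of~\cite{Bobinski2009}*{Propositions~3.2 and~3.3}, so my plan is to reduce to those results and then identify their explicit conditions with the Yoneda product statements in (1) and (2). Interpreting $Z \in T_N \calE_\Lambda^{\bd_2, \bd_1}$ as a $k[\varepsilon]$-valued point produces modules $U' := U + \varepsilon Z_{1, 1}$ and $V' := V + \varepsilon Z_{2, 2}$ over $\Lambda \otimes_k k[\varepsilon]$, together with an extension of $V'$ by $U'$ encoded by $Z_{2, 1}$. By the definition of $\calE_{\Lambda, h, e}^{\bd_2, \bd_1}$ combined with the upper semicontinuity of $\dim_k \Hom$ and $\dim_k \Ext^1$, the condition $Z \in T_N \calE_{\Lambda, h, e}^{\bd_2, \bd_1}$ is equivalent to $\Hom_\Lambda(V', U')$ and $\Ext^1_\Lambda(V', U')$ being free over $k[\varepsilon]$ of ranks $h$ and $e$, which in turn amounts to saying that every $f \in \Hom_\Lambda(V, U)$ lifts to a morphism $V' \to U'$ and every $\eta \in \Ext^1_\Lambda(V, U)$ lifts to an extension of $V'$ by $U'$.

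To compute these obstructions, I would apply $\Hom_\Lambda(V, -)$ to $\xi^{Z_{1, 1}} \colon 0 \to U \to U' \to U \to 0$ and $\Hom_\Lambda(-, U)$ to $\xi^{Z_{2, 2}} \colon 0 \to V \to V' \to V \to 0$, producing long exact sequences whose connecting homomorphisms are Yoneda product with $[\xi^{Z_{1, 1}}]$ and $[\xi^{Z_{2, 2}}]$ respectively. A two-step lifting argument---first lift across $U \rightsquigarrow U'$, then across $V \rightsquigarrow V'$, and compare---shows that the obstruction to lifting $f$ is $[\xi^{Z_{1, 1}}] \circ f - f \circ [\xi^{Z_{2, 2}}] \in \Ext^1_\Lambda(V, U)$, giving (1), and the obstruction to lifting $\eta$ is $[\xi^{Z_{1, 1}}] \circ \eta + \eta \circ [\xi^{Z_{2, 2}}] \in \Ext^2_\Lambda(V, U)$, giving (2); the sign flip between the two obstruction formulas is the standard Koszul sign produced by the degree of $\eta$.

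The main technical point is verifying that the cocycle-level conditions produced by~\cite{Bobinski2009}*{Propositions~3.2 and~3.3} coincide with these intrinsic Yoneda product expressions, and in particular pinning down the signs. Concretely, this is achieved by writing down explicit representatives for each of $[\xi^{Z_{1, 1}}] \circ f$, $f \circ [\xi^{Z_{2, 2}}]$, $[\xi^{Z_{1, 1}}] \circ \eta$ and $\eta \circ [\xi^{Z_{2, 2}}]$ at the level of the complex $\bbV^{\bd_2, \bd_1} \to \bbA^{\bd_2, \bd_1} \to \prod_{\rho \in \frakR} \bbM_{d_1(t\rho), d_2(s\rho)}$ implicit in Section~\ref{sect geometric}, and then matching term-by-term with the equations stated in the cited propositions.
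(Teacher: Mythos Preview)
The paper does not supply a proof of this proposition at all: it is stated as ``a reformulation of~\cite{Bobinski2009}*{Propositions~3.2 and~3.3}'' and left at that. Your proposal therefore goes beyond what the paper does, by sketching the deformation-theoretic argument that identifies the tangent-space conditions of the cited propositions with the Yoneda-product conditions~(1) and~(2). That sketch is correct in outline and is the natural way to justify the reformulation; the only point requiring genuine care is the one you already flag, namely matching the explicit cocycle equations of \cite{Bobinski2009} against the intrinsic Yoneda expressions and fixing the sign in~(2).
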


\section{Criterion for nonsingularity} \label{sect nonsingularity}

We first prove the following result, which is crucial for the proof of the main result.

\begin{proposition} \label{prop reduction}
Let $\bd_1$ and $\bd_2$ be dimension vectors, $\bd := \bd_1 + \bd_2$, $U \in \mod_\Lambda^{\bd_1} (k)$ and $V \in \mod_\Lambda^{\bd_2} (k)$. Let $N := U \oplus V$ and $\calZ$ be a $\GL_\bd (k)$-invariant irreducible closed subset of $\mod_\Lambda^\bd (k)$, considered as an affine scheme with its reduced structure, such that $N \in \calZ$. If $[U, V]_\Lambda^1 = 0$, then the canonical map
\[
\GL_\bd \times (\calZ \cap \calE_\Lambda^{\bd_2, \bd_1}) \to \calZ
\]
is smooth at $(\Id, N)$. In particular, the scheme $\calZ \cap \calE_\Lambda^{\bd_2, \bd_1}$ is reduced at $N$ and
\[
\dim_k T_N \calZ = \dim_k T_N (\calZ \cap \calE_\Lambda^{\bd_2, \bd_1}) + \dim_k \bbB^{U, V}.
\]
\end{proposition}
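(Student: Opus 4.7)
The plan is to compute the differential $d\phi$ of the canonical map $\phi \colon \GL_\bd \times (\calZ \cap \calE_\Lambda^{\bd_2, \bd_1}) \to \calZ$ at $(\Id, N)$, show it is surjective using the hypothesis $[U, V]_\Lambda^1 = 0$, and then upgrade this to smoothness via a base-change argument from the corresponding unrestricted action map. I identify $T_N \mod_\Lambda^\bd$ with $\bbZ^{N, N}$ and decompose it into the $2 \times 2$ blocks determined by $N = U \oplus V$, so that $T_N \calE_\Lambda^{\bd_2, \bd_1}$ is the upper-block-triangular piece from the description preceding Proposition~\ref{prop tangent}. Because $\calE_\Lambda^{\bd_2, \bd_1}$ is cut out of $\mod_\Lambda^\bd$ by the linear equations annihilating the $\bbA^{\bd_1, \bd_2}$-block, the same equations cut $\calZ \cap \calE_\Lambda^{\bd_2, \bd_1}$ out of $\calZ$, which yields the identification $T_N(\calZ \cap \calE_\Lambda^{\bd_2, \bd_1}) = T_N \calZ \cap T_N \calE_\Lambda^{\bd_2, \bd_1}$.

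Writing $d\phi(h, Z) = (h \circ N - N \circ h) + Z$, I would prove surjectivity by taking $Z' \in T_N \calZ$ and inspecting its lower-left block $Z'_{1, 2} \in \bbZ^{U, V}$. The exact sequence~\eqref{seqBZExt} combined with $[U, V]_\Lambda^1 = 0$ forces $\bbZ^{U, V} = \bbB^{U, V}$, so $Z'_{1, 2} = h_0 \circ U - V \circ h_0$ for some $h_0 \in \bbV^{\bd_1, \bd_2}(k)$. Choosing $h \in \bbV^{\bd, \bd}(k)$ with only lower-left block $h_0$, the element $Z'' := Z' - (h \circ N - N \circ h)$ is upper-block-triangular and remains in $T_N \calZ$, since $h \circ N - N \circ h \in T_N \calO_N \subseteq T_N \calZ$ by $\GL_\bd$-invariance of $\calZ$; by the tangent identification above it lies in $T_N(\calZ \cap \calE_\Lambda^{\bd_2, \bd_1})$, exhibiting $Z' = d\phi(h, Z'')$. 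A parallel computation identifies $\ker d\phi$ as the set of pairs $(h, -(h \circ N - N \circ h))$ with $h_{1, 2} \in \Hom_\Lambda(U, V)$; using $\dim_k \bbB^{U, V} = \dim_k \bbV^{\bd_1, \bd_2}(k) - [U, V]_\Lambda$, this kernel has dimension $\dim \GL_\bd - \dim_k \bbB^{U, V}$, and rank--nullity yields the claimed $\dim_k T_N \calZ = \dim_k T_N(\calZ \cap \calE_\Lambda^{\bd_2, \bd_1}) + \dim_k \bbB^{U, V}$.

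The main obstacle is upgrading surjectivity of $d\phi$ to genuine smoothness of $\phi$ at $(\Id, N)$, as $\calZ$ itself need not be smooth at $N$. My approach is to first establish smoothness at $(\Id, N)$ of the unrestricted action map
\[ \Psi \colon \GL_\bd \times \calE_\Lambda^{\bd_2, \bd_1} \to \mod_\Lambda^\bd, \quad (g, M) \mapsto g \ast M. \]
The same Ext-vanishing argument now gives surjectivity of $d\Psi$ onto the full $T_N \mod_\Lambda^\bd$, and a standard slice-type argument exploiting the $\GL_\bd$-equivariance of $\Psi$, together with the matching fiber-dimension count, promotes this to smoothness of $\Psi$ at $(\Id, N)$. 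Since $\calZ \subseteq \mod_\Lambda^\bd$ is $\GL_\bd$-invariant and closed, the square relating $\phi$ to $\Psi$ along the inclusions $\calZ \hookrightarrow \mod_\Lambda^\bd$ and $\calZ \cap \calE_\Lambda^{\bd_2, \bd_1} \hookrightarrow \calE_\Lambda^{\bd_2, \bd_1}$ is Cartesian, so $\phi$ inherits smoothness at $(\Id, N)$ from $\Psi$ by base change. The reducedness of $\calZ \cap \calE_\Lambda^{\bd_2, \bd_1}$ at $N$ then follows automatically: smoothness of $\phi$ together with reducedness of the target $\calZ$ forces reducedness of the source at $(\Id, N)$, and smoothness of the factor $\GL_\bd$ then descends reducedness to $\calZ \cap \calE_\Lambda^{\bd_2, \bd_1}$ at $N$.
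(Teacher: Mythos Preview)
Your tangent-space computations and the Cartesian base-change step are correct; in particular $\Psi^{-1}(\calZ)=\GL_\bd\times(\calZ\cap\calE_\Lambda^{\bd_2,\bd_1})$ as schemes, since the automorphism $(g,M)\mapsto(g,g\ast M)$ of $\GL_\bd\times\mod_\Lambda^\bd$ identifies $\Psi^{-1}(\calZ)$ with $\GL_\bd\times\calZ$ restricted to $\GL_\bd\times\calE_\Lambda^{\bd_2,\bd_1}$. The gap is your justification for smoothness of $\Psi\colon\GL_\bd\times\calE_\Lambda^{\bd_2,\bd_1}\to\mod_\Lambda^\bd$ at $(\Id,N)$. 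A slice argument \`a la Slodowy needs transversality to the orbit in the ambient \emph{smooth} variety $\bbA^{\bd,\bd}$, and the upper-block-triangular subspace fails this: its sum with $\bbB^{N,N}$ has lower-left block only $\bbB^{U,V}$, which equals $\bbA^{\bd_1,\bd_2}(k)$ only when $[U,V]_\Lambda=0$. Surjectivity of $d\Psi$ onto $T_N\mod_\Lambda^\bd=\bbZ^{N,N}$ does hold, but since both source and target may be singular this alone does not yield smoothness, and a ``matching fiber-dimension count'' is not a substitute. Your claim is nonetheless true and can be proved via the infinitesimal lifting criterion: for Artinian $A$ with square-zero $I$ satisfying $I\frakm_A=0$, any $M'\in\mod_\Lambda^\bd(A)$ over $N$ lifting some $\bar M\in\calE_\Lambda^{\bd_2,\bd_1}(A/I)$ has lower-left block in $I\otimes_k\bbZ^{U,V}=I\otimes_k\bbB^{U,V}$, so a correction $g=\Id+h$ with $h$ strictly lower-block makes $g^{-1}\ast M'\in\calE_\Lambda^{\bd_2,\bd_1}(A)$. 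This is the missing computation.

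The paper takes a different route. It chooses a linear complement $\calC_0$ of $\bbB^{U,V}$ in $\bbA^{\bd_1,\bd_2}(k)$ and works with the affine subspace $\calC=\left[\begin{smallmatrix}\bbA&\bbA\\\calC_0&\bbA\end{smallmatrix}\right]\subseteq\bbA^{\bd,\bd}$, which \emph{is} transversal to the orbit, so Slodowy gives smoothness of $\GL_\bd\times(\calZ\cap\calC)\to\calZ$ at $(\Id,N)$ immediately. The cost is a separate argument that $\calZ\cap\calC$ and $\calZ\cap\calE_\Lambda^{\bd_2,\bd_1}$ coincide near $N$: the paper shows that any irreducible component of $\calZ\cap\calC$ through $N$ escaping $\calE_\Lambda^{\bd_2,\bd_1}$ would, after a curve-and-rescaling trick with the uniformizer, produce a tangent vector at $N$ with nonzero lower-left block lying in $\calC_0\cap\bbZ^{U,V}=\calC_0\cap\bbB^{U,V}=0$, a contradiction. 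Thus the two approaches trade a deformation-theoretic verification (your lifting problem) for a geometric one (the paper's curve argument), both ultimately pivoting on $\bbZ^{U,V}=\bbB^{U,V}$.
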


\begin{proof}
Fix a (linear) complement $\calC_0$ of $\bbB^{U, V}$ in $\bbA^{\bd_1, \bd_2} (k)$. Put
\[
\calC :=
\begin{bmatrix}
\bbA^{\bd_1, \bd_1} (k) & \bbA^{\bd_2, \bd_1} (k)
\\
\calC_0 & \bbA^{\bd_2, \bd_2} (k)
\end{bmatrix},
\]
which we consider as an affine scheme with the reduced structure (given by the equations describing $\calC_0$). Using general properties of smooth morphisms (compare~\cite{Slodowy}*{Section~5.1}), it follows that the canonical map
\[
\GL_\bd \times (\calZ \cap \calC) \to \calZ
\]
is smooth at $(\Id, N)$. Obviously, $\calE_\Lambda^{\bd_2, \bd_1} \subseteq \calC$, hence $\calZ \cap \calE_\Lambda^{\bd_2, \bd_1} \subseteq \calZ \cap \calC$ (both inclusions are inclusions of schemes). We prove that there exists an open neighbourhood $\calU$ of $N$ in $\calZ \cap \calC$ such that $\calU \subseteq \calE_\Lambda^{\bd_2, \bd_1} (k)$. This will imply that the schemes $\calZ \cap \calC$ and $\calZ \cap \calE_\Lambda^{\bd_2, \bd_1}$ coincide in a neighborhood of $N$, hence finish the proof.

We show that every irreducible component $\calX$ of $\calZ \cap \calC$ containing $N$ is contained in $\calE_\Lambda^{\bd_2, \bd_1} (k)$. Assume this is not the case and let $\calX$ be an irreducible component of $\calZ \cap \calC$ containing $N$ such that $\calX \not \subseteq \calE_\Lambda^{\bd_2, \bd_1} (k)$. Note that $\calX$ is $\GL_{\bd_1} (k) \times \GL_{\bd_2} (k)$-invariant, since both $\calZ$ and $\calC$ are $\GL_{\bd_1} (k) \times \GL_{\bd_2} (k)$-invariant, where we embed $\GL_{\bd_1} \times \GL_{\bd_2}$ in $\GL_\bd$ diagonally. If $\calU := \calX \setminus \calE_\Lambda^{\bd_2, \bd_1} (k)$, then $\calU$ is a nonempty open subset of $\calX$, hence $\calX = \ol{\calU}$. Using basic facts from algebraic geometry one shows there exists a nonsingular curve $\calC$ and a regular map $\varphi \colon \calC \to \calX$ such that $\varphi (c_0) = N$, for some $c_0 \in \calC$, and $\varphi^{-1} (\calU)$ is a cofinite subset of $\calC$ (see for example~\cite{Zwara2011}*{Lemma~5.3}). Let $R$ be the local ring of $\calC$ at $c_0$ and $t$ its uniformizing element. A map $\varphi$ is represented by $A \in \mod_\Lambda^\bd (R)$ such that $A / t A = N$.

Write
\[
A =
\begin{bmatrix}
A_{1, 1} & A_{2, 1} \\ A_{1, 2} & A_{2, 2}
\end{bmatrix},
\]
for $A_{i, j} \in \bbA^{\bd_i, \bd_j} (R)$, $i, j \in \{ 1, 2 \}$. In particular,
\begin{gather*}
A_{1, 1} / t A_{1, 1} = U, \qquad A_{1, 2} / t A_{1, 2} = 0,
\\
A_{2, 1} / t A_{2, 1} = 0 \qquad \text{and} \qquad A_{2, 2} / t A_{2, 2} = V.
\end{gather*}
Since $\varphi^{-1} (\calU)$ is a cofinite subset of $\calC$, $A_{1, 2} \neq 0$. Let $n \in \bbN$ be maximal such that $t^n \mid A_{1, 2}$. Then $n > 0$. Let $g \in \GL_\bd (K)$, where $K$ is the field of fractions of $R$, be given by
\[
g :=
\begin{bmatrix}
t^{n - 1} \Id & 0 \\ 0 & \Id
\end{bmatrix}.
\]
Then
\[
B := g \ast A =
\begin{bmatrix}
A_{1, 1} & t^{n - 1} A_{2, 1} \\ t \frac{A_{1, 2}}{t^n} & A_{2, 2}
\end{bmatrix}.
\]
By shrinking $\calC$ if necessary, we get from $B$ a regular map $\psi \colon \calC \to \calX$ (we use here that $\calX$ is $\GL_{\bd_1} (k) \times \GL_{\bd_2} (k)$-invariant and closed) such that $\psi (c_0) = N$. The maximality of $n$ implies that
\[
\Im (T_{c_0} \psi) \not \subseteq
\begin{bmatrix}
\bbA^{\bd_1, \bd_1} & \bbA^{\bd_2, \bd_1} \\ 0 & \bbA^{\bd_2, \bd_2}
\end{bmatrix}.
\]
On the other hand,
\[
T_N (\calZ \cap \calC) \subseteq (T_N \mod_\Lambda^\bd) \cap \calC =
\begin{bmatrix}
\bbZ^{U, U} & \bbZ^{V, U} \\ \bbZ^{U, V} & \bbZ^{V, V}
\end{bmatrix}
\cap \calC \subseteq
\begin{bmatrix}
\bbA^{\bd_1, \bd_1} & \bbA^{\bd_2, \bd_1} \\ 0 & \bbA^{\bd_2, \bd_2}
\end{bmatrix},
\]
since $\bbZ^{U, V} = \bbB^{U, V}$ (as $[U, V]_\Lambda^1 = 0$). This gives a contradiction, which finishes the proof.
\end{proof}

If $\calV$ is a subset of $\calE_\Lambda^{\bd_2, \bd_1} (k)$, for dimension vectors $\bd_1$ and $\bd_2$, then
\begin{gather*}
\hom (\calV) := \min \{ [V, U]_\Lambda : \text{$(U, V) \in \pi (\calV)$} \}
\\
\intertext{and}
\ext (\calV) := \min \{ [V, U]_\Lambda^1 : \text{$(U, V) \in \pi (\calV)$} \},
\end{gather*}
where $\pi \colon \calE_\Lambda^{\bd_2, \bd_1} \to \mod_\Lambda^{\bd_1} \times \mod_\Lambda^{\bd_2}$ is the canonical projection. We will use the following consequence of Proposition~\ref{prop reduction}.

\begin{corollary} \label{coro nonsingular}
Assume $\Lambda$ is triangular and $\gldim \Lambda \leq 2$. Let $\bd_1$ and $\bd_2$ be dimension vectors, $U \in \mod_\Lambda^{\bd_1} (k)$, $V \in \mod_\Lambda^{\bd_2} (k)$, $\bd := \bd_1 + \bd_2$, and $N := U \oplus V$. Let $\calZ$ be an irreducible component of $\mod_\Lambda^\bd (k)$, considered as an affine scheme with its reduced structure, such that $N \in \calZ$.
Put
\[
h := [V, U]_\Lambda \qquad \text{and} \qquad e := [V, U]_\Lambda^1.
\]
Assume $\Ext_\Lambda^1 (U, V) = 0$ and there exists an open subset $\calV$ of $\calZ \cap \calE_\Lambda^{\bd_2, \bd_1} (k)$ such that $N \in \calV$ and
\[
\hom (\calV) = h \qquad and \qquad \ext (\calV) = e.
\]
If $\idim_\Lambda V \leq 1$ and there exists an exact sequence
\[
\xi \colon 0 \to U \to M \to V \to 0
\]
with $\pdim_\Lambda M \leq 1$, then $N$ is a nonsingular point of $\calZ$.
\end{corollary}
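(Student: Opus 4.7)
The plan is a dimension squeeze. Since $\calZ$ is an irreducible component of $\mod_\Lambda^\bd(k)$ we always have $\dim \calZ \geq a_\Lambda(\bd)$, and trivially $\dim T_N \calZ \geq \dim \calZ$; so if I can prove $\dim T_N \calZ \leq a_\Lambda(\bd) = \dim_k \bbV^{\bd, \bd}(k) - \chi_\Lambda(\bd)$ (the last equality from~\eqref{eq ad} and $\gldim \Lambda \leq 2$), all three quantities collapse to $a_\Lambda(\bd)$ and $N$ is a smooth point.

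The first step is to apply Proposition~\ref{prop reduction}, available because $\Ext_\Lambda^1(U, V) = 0$, to get $\dim T_N \calZ = \dim T_N(\calZ \cap \calE_\Lambda^{\bd_2, \bd_1}) + \dim_k \bbB^{U, V}$. By upper semicontinuity of the functions $[-, -]_\Lambda$ and $[-, -]_\Lambda^1$ on $\mod_\Lambda^{\bd_1}(k) \times \mod_\Lambda^{\bd_2}(k)$, together with the hypothesis that these attain their minima $h$ and $e$ on the open neighborhood $\calV$ of $N$ inside $\calZ \cap \calE_\Lambda^{\bd_2, \bd_1}(k)$, a sufficiently small neighborhood of $N$ in $\calZ \cap \calE_\Lambda^{\bd_2, \bd_1}(k)$ already lies in $\calE_{\Lambda, h, e}^{\bd_2, \bd_1}(k)$. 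Hence $T_N(\calZ \cap \calE_\Lambda^{\bd_2, \bd_1}) = T_N(\calZ \cap \calE_{\Lambda, h, e}^{\bd_2, \bd_1}) \subseteq T_N \calE_{\Lambda, h, e}^{\bd_2, \bd_1}$, and by Proposition~\ref{prop tangent} the latter consists of upper-triangular $Z$'s whose diagonal classes $([\xi^{Z_{1,1}}], [\xi^{Z_{2,2}}])$ lie in the kernel of the linear map
\[
\Phi \colon \Ext_\Lambda^1(U, U) \oplus \Ext_\Lambda^1(V, V) \to \Hom_k(\Hom_\Lambda(V, U), \Ext_\Lambda^1(V, U)) \oplus \Hom_k(\Ext_\Lambda^1(V, U), \Ext_\Lambda^2(V, U))
\]
sending $(a, b)$ to $(f \mapsto a \circ f - f \circ b,\ \xi' \mapsto a \circ \xi' + \xi' \circ b)$. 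Plugging the dimension formulas of Section~\ref{sect geometric}, in particular~\eqref{eqdimZ}, for the blocks $\bbZ^{V, U}$, $\bbB^{U, U}$, $\bbB^{V, V}$, $\bbB^{U, V}$ and exploiting $[U, V]_\Lambda^1 = 0$, I would arrive at
\[
\dim T_N \calZ \leq \dim_k \bbV^{\bd, \bd}(k) - [N, N]_\Lambda + [N, N]_\Lambda^1 - \rank \Phi.
\]

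Comparing with the target bound, the remaining task is to show $\rank \Phi \geq [N, N]_\Lambda^2$, and this is where the exact sequence $\xi$ is decisive; this is the step I expect to be the main obstacle. First, $\idim_\Lambda V \leq 1$ kills $\Ext_\Lambda^2(U, V)$ and $\Ext_\Lambda^2(V, V)$, so $[N, N]_\Lambda^2 = [U, U]_\Lambda^2 + [V, U]_\Lambda^2$. The subtler observation is that in fact $[U, U]_\Lambda^2 = 0$: applying $\Hom_\Lambda(-, X)$ to $\xi$ squeezes $\Ext_\Lambda^2(U, X)$ between $\Ext_\Lambda^2(M, X) = 0$ (from $\pdim_\Lambda M \leq 1$) and $\Ext_\Lambda^3(V, X) = 0$ (from $\gldim \Lambda \leq 2$), forcing $\pdim_\Lambda U \leq 1$. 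With $[N, N]_\Lambda^2$ thus reduced to $[V, U]_\Lambda^2$, the long exact sequence from $\Hom_\Lambda(-, U)$ applied to $\xi$ makes the Yoneda multiplication $- \circ [\xi] \colon \Ext_\Lambda^1(U, U) \to \Ext_\Lambda^2(V, U)$ surjective (once more by $\pdim_\Lambda M \leq 1$). Lifting a basis of $\Ext_\Lambda^2(V, U)$ to elements $a_1, \ldots, a_r \in \Ext_\Lambda^1(U, U)$, the vectors $\Phi(a_1, 0), \ldots, \Phi(a_r, 0)$ are linearly independent in the target of $\Phi$, since their second components already differ at $\xi' = [\xi]$. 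This gives $\rank \Phi \geq r = [V, U]_\Lambda^2 = [N, N]_\Lambda^2$ and closes the squeeze.
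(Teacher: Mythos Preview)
Your proof is correct and follows essentially the same route as the paper's: both apply Proposition~\ref{prop reduction}, pass to $T_N \calE_{\Lambda,h,e}^{\bd_2,\bd_1}$ via the hypothesis on $\calV$ (using reducedness at $N$), deduce $\pdim_\Lambda U\leq 1$ and hence $[N,N]_\Lambda^2=[V,U]_\Lambda^2$ from the sequence $\xi$, and finish with the surjectivity of $-\circ[\xi]\colon\Ext_\Lambda^1(U,U)\to\Ext_\Lambda^2(V,U)$. The only cosmetic difference is that you package both conditions of Proposition~\ref{prop tangent} into the single map $\Phi$ and bound its rank, whereas the paper works directly with the evaluation map $(Z_{1,1},Z_{2,2})\mapsto [\xi^{Z_{1,1}}]\circ[\xi]+[\xi]\circ[\xi^{Z_{2,2}}]$; since your rank bound uses only the evaluation at $[\xi]$, the two arguments coincide.
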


\begin{proof}
We want to show that $\dim_k T_N \calZ \leq \dim \calZ$. Since $\dim \calZ \geq a_\Lambda (\bd)$, it is enough to show that $\dim_k T_N \calZ \leq a_\Lambda (\bd)$. Using Proposition~\ref{prop reduction} it is sufficient to find an appropriate upper bound for the dimension of $T_N (\calZ \cap \calE_\Lambda^{\bd_2, \bd_1})$. Since the scheme $\calZ \cap \calE_\Lambda^{\bd_2, \bd_1}$ is reduced at $N$ by Proposition~\ref{prop reduction}, our assumptions imply that $T_N (\calZ \cap \calE_\Lambda^{\bd_2, \bd_1}) = T_N \calV \subseteq T_N \calE_{\Lambda, h, e}^{\bd_2, \bd_1}$.

We first observe that $\pdim_\Lambda U \leq 1$. Indeed, if $X$ is a $\Lambda$-module and we apply $\Hom_\Lambda (-, X)$ to $\xi$, then we get an exact sequence
\[
0 = \Ext_\Lambda^2 (M, X) \to \Ext_\Lambda^2 (U, X) \to \Ext_\Lambda^3 (V, X) = 0,
\]
as $\pdim_\Lambda M \leq 1$ and $\gldim \Lambda \leq 2$. Since $\pdim_\Lambda U \leq 1$ and $\idim_\Lambda V \leq 1$, $\Ext_\Lambda^2 (N, N) = \Ext_\Lambda^2 (V, U)$. Let
\[
Z =
\begin{bmatrix}
Z_{1, 1} & Z_{2, 1} \\ 0 & Z_{2, 2}
\end{bmatrix}
\in T_N \calE_{\Lambda, h, e}^{\bd_1, \bd_2}.
\]
Then $Z_{1, 1} \in \bbZ^{U, U}$, $Z_{2, 1} \in \bbZ^{V, U}$, $Z_{2, 2} \in \bbZ^{V, V}$, and $[\xi^{Z_{1, 1}}] \circ [\xi] + [\xi] \circ [\xi^{Z_{2, 2}}] = 0$, by Proposition~\ref{prop tangent}. If we apply $\Hom_\Lambda (-, U)$ to $\xi$, then we get an exact sequence
\[
\Ext_\Lambda^1 (U, U) \to \Ext_\Lambda^2 (V, U) \to \Ext_\Lambda^2 (M, U) = 0.
\]
Consequently, the map
\[
\bbZ^{U, U} \times \bbZ^{V, V} \to \Ext_\Lambda^2 (V, U), \; (Z_{1, 1}, Z_{2, 2}) \mapsto [\xi^{Z_{1, 1}}] \circ [\xi] + [\xi] \circ [\xi^{Z_{2, 2}}],
\]
is an epimorphism, hence
\[
\dim_k T_N \calE_{\Lambda, h, e}^{\bd_1, \bd_2} \leq \dim_k \bbZ^{U, U} + \dim_k \bbZ^{V, V} + \dim_k \bbZ^{V, U} - \dim_k \Ext_\Lambda^2 (V, U).
\]
Since $\bbZ^{U, V} = \bbB^{U, V}$ and $\Ext_\Lambda^2 (V, U) = \Ext_\Lambda^2 (N, N)$, Proposition~\ref{prop reduction} implies
\[
\dim_k T_N \calZ \leq \dim_k \bbZ^{N, N} - \dim_k \Ext_\Lambda^2 (N, N).
\]
Using~\eqref{eqdimZ}, we get
\[
\dim_k T_N \calZ \leq \dim_k \bbV^{\bd, \bd} (k) - \chi_\Lambda (\bd) = a_\Lambda (\bd),
\]
where the latter equality follows from~\eqref{eq ad}, and the claim follows.
\end{proof}

\section{Existence of an exact sequence} \label{sect sequence}

In view of Corollary~\ref{coro nonsingular}, the following result is important.

\begin{proposition} \label{prop main}
Let $\bd_1$ and $\bd_2$ be dimension vectors, $U \in \mod_\Lambda^{\bd_1} (k)$, $V \in \mod_\Lambda^{\bd_2} (k)$, $\bd := \bd_1 + \bd_2$ and $N := U \oplus V$. Let $\calZ$ be a closed irreducible subset of $\mod_\Lambda^\bd (k)$ and let $\calU$ be a nonempty open subset of $\calZ$ such that $N \in \calZ \setminus \calU$ and $\calO_N$ is maximal in $\calZ \setminus \calU$. Assume $\Ext_\Lambda^1 (U, V) = 0$ and there exists a $\Lambda$-module $H$ such that
\begin{equation} \label{eq assumption}
[H, V]_\Lambda > \min \{ [H, M]_\Lambda : \text{$M \in \calZ$} \}
\end{equation}
and $\Ext_\Lambda^1 (H, V) = 0$. Then there exist an exact sequence
\[
0 \to U \to M \to V \to 0
\]
with $M \in \calU$.
\end{proposition}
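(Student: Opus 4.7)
My strategy is first to reduce the claim to an existence question inside $\calZ$, then to analyze the local structure of $\calZ$ at $N$ via Proposition~\ref{prop reduction}, and finally to use the $H$-hypothesis to pin down a module of the required form.

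It suffices to find $M \in \calZ$ with $M \not\simeq N$ fitting into an exact sequence $0 \to U \to M \to V \to 0$. Indeed, given such $M$, the one-parameter family $t \mapsto W^{tZ}$ (where $Z \in \bbZ^{V,U}$ represents the sequence) shows that $N \in \ol{\calO}_M$; since $\calO_M \neq \calO_N$, maximality of $\calO_N$ in $\calZ \setminus \calU$ forces $\calO_M \cap \calU \neq \emptyset$, and any representative in this intersection, being isomorphic to $M$, inherits the sequence.

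Since $\Ext_\Lambda^1(U,V) = 0$, Proposition~\ref{prop reduction} applies and gives that the canonical morphism
\[
\mu \colon \GL_\bd \times (\calZ \cap \calE_\Lambda^{\bd_2, \bd_1}) \to \calZ
\]
is smooth at $(\Id, N)$, hence open in a neighborhood of $(\Id, N)$. Thus there is an open neighborhood $\calV$ of $N$ in $\calZ$ such that every $M \in \calV$ is $\GL_\bd(k)$-conjugate to some element of $\calZ \cap \calE_\Lambda^{\bd_2, \bd_1}$, and therefore fits into a short exact sequence $0 \to U' \to M \to V' \to 0$ with $U' \in \mod_\Lambda^{\bd_1}(k)$ and $V' \in \mod_\Lambda^{\bd_2}(k)$. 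Upper semicontinuity of $[H,-]_\Lambda$ on $\mod_\Lambda^\bd(k)$ combined with~\eqref{eq assumption} yields a nonempty open subset $\calZ^\ast := \{M \in \calZ : [H,M]_\Lambda < [H,V]_\Lambda\}$ of $\calZ$, which avoids $N$ since $[H,N]_\Lambda \geq [H,V]_\Lambda$. By irreducibility of $\calZ$, the intersection $\calV \cap \calZ^\ast$ is nonempty.

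To produce a candidate $M$, I would pick a nonsingular curve $\gamma \colon \calC \to \calZ$ with $\gamma(c_0) = N$ and $\gamma(\calC \setminus \{c_0\}) \subseteq \calV \cap \calZ^\ast$ and lift it through the smooth morphism $\mu$ (passing to an \'etale cover if needed) to a curve $s \mapsto (g(s), W(s))$ in $\GL_\bd \times (\calZ \cap \calE_\Lambda^{\bd_2, \bd_1})$ with $(g(c_0), W(c_0)) = (\Id, N)$; the diagonal blocks $(U(s), V(s))$ of $W(s)$ then specialize to $(U, V)$ at $s = c_0$. The main obstacle I anticipate is the last step: forcing $(U(s), V(s)) \in \calO_U \times \calO_V$ for generic $s$, so that the generic fibre of the family provides the required exact sequence $0 \to U \to M \to V \to 0$ with $M \in \calU$. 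For this, my plan is to apply $\Hom_\Lambda(H,-)$ to the short exact sequence $0 \to U(s) \to W(s) \to V(s) \to 0$, combining the semicontinuous persistence of the vanishing $\Ext_\Lambda^1(H,V) = 0$ in a neighborhood of $V$ with the strict inequality $[H,W(s)]_\Lambda < [H,V]_\Lambda$ to exclude proper degeneration of $V$ to $V(s)$ and of $U$ to $U(s)$; translating this control through the resulting $\Hom$--$\Ext$ long exact sequence is the decisive technical point.
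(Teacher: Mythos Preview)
Your opening reduction is correct and matches the paper's Step~2: once you have $M \in \calZ$ with $M \not\simeq N$ fitting into $0 \to U \to M \to V \to 0$, maximality of $\calO_N$ in $\calZ \setminus \calU$ forces $M \in \calU$.

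The gap is exactly where you flag it, and your outlined plan does not close it. After lifting the curve through the smooth map $\mu$ you obtain $W(s) = \left[\begin{smallmatrix} U(s) & Z(s) \\ 0 & V(s) \end{smallmatrix}\right]$ with $(U(c_0),V(c_0)) = (U,V)$, and you need $(U(s),V(s)) \simeq (U,V)$ for generic $s$. But the only tools at hand are upper semicontinuity of $[H,-]_\Lambda$ and $[H,-]_\Lambda^1$, together with $[H,W(s)]_\Lambda < [H,V]_\Lambda$. Semicontinuity gives $[H,V(s)]_\Lambda \leq [H,V]_\Lambda$ and $[H,V(s)]_\Lambda^1 = 0$ near $c_0$; the long exact sequence for $\Hom_\Lambda(H,-)$ then yields only inequalities among $[H,U(s)]_\Lambda$, $[H,W(s)]_\Lambda$, $[H,V(s)]_\Lambda$ and $[H,U(s)]_\Lambda^1$. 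A single numerical invariant such as $[H,-]_\Lambda$ does not separate orbits in $\mod_\Lambda^{\bd_2}(k)$, so none of this forces $V(s) \simeq V$. Nothing prevents the lift from landing in an irreducible component of $\calZ \cap \calE_\Lambda^{\bd_2,\bd_1}$ through $N$ whose generic block pair $(U',V')$ is not isomorphic to $(U,V)$; smoothness of $\mu$ at $(\Id,N)$ gives no control over which component the section selects.

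The paper avoids this obstacle by never trying to control the generic fibre. One takes a curve represented by $A \in \mod_\Lambda^\bd(R)$, $R$ a discrete valuation ring with uniformizer $t$, with $A/tA = N$ and generic point in $\calU_0 := \{[H,-]_\Lambda = d\}$, and then iteratively conjugates by elements of $\GL_\bd(R)$: at stage $i$ one uses $\Ext_\Lambda^1(U,V)=0$ to kill the lower-left block of $A \pmod{t^i}$. Either the process halts at some $n$ with an upper-right entry $t^{n-1}Z$ where $Z \in \bbZ^{V,U}\setminus\bbB^{V,U}$, and then $M := \left[\begin{smallmatrix} U & Z \\ 0 & V \end{smallmatrix}\right]$ is the required extension; or it continues for all $i$, in which case $A/t^jA \simeq U_j \oplus V_j$ with $V_j$ an iterated extension of $j$ copies of $V$. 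From $\Ext_\Lambda^1(H,V)=0$ one gets $[H,V_j]_\Lambda = j\cdot[H,V]_\Lambda \geq j(d+1)$, whereas interpreting $\Hom_\Lambda(H,-)$ as the kernel of a family of linear maps and using the orbit structure of matrices over $R$ shows $[H,A/t^jA]_\Lambda = jd + O(1)$, a contradiction for large $j$. Thus the $H$-hypothesis enters as a growth-rate obstruction over $R/t^j$, not as a constraint on the generic fibre of a one-parameter family.
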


The rest of the section is devoted to the proof of Proposition~\ref{prop main}. We divide the proof into steps.

Let
\begin{equation} \label{eq U0}
\calU_0 := \{ M \in \calZ : \text{$[H, M]_\Lambda = d$} \},
\end{equation}
where $d := \min \{ [H, M]_\Lambda : \text{$M \in \calZ$} \}$. Then $\calU_0$ is an open subset of $\calZ$ and $\calZ = \ol{\calU_0}$. Since $N \in \calZ = \ol{\calU_0}$, there exists a nonsingular curve $\calC$ and a regular map $\varphi \colon \calC \to \calZ$ such that $\varphi (c_0) = N$, for some $c_0 \in \calC$, and $\varphi^{-1} (\calU_0)$ is a cofinite subset of $\calC$. Let $R$ be the local ring of $\calC$ at $c_0$ and $t$ its uniformizing element. A map $\varphi$ is represented by $A \in \mod_\Lambda^\bd (R)$ such that $A / t A = N$. The next observation is inspired by~\cite{Zwara2000}*{Lemma~3.3}.

\begin{step} \label{step modify}
There exist $n \in \{ 2, 3, \ldots \} \cup \{ \infty \}$ and $g_i \in \GL_\bd (R)$, $0 \leq i < n$, such that the following conditions are satisfied:
\begin{enumerate}

\item
If $1 \leq i < n$ and $A_i := g_{i - 1} \ast A$, then
\[
A_i / t^i A_i  = U_i \oplus V_i,
\]
for some $U_i \in \mod_\Lambda^{\bd_1} (R / t^i)$ and $V_i \in \mod_\Lambda^{\bd_2} (R / t^i)$. Moreover, $U_1 = U$, $V_1 = V$, and if $1 < i < n$, then
\[
U_i / t^{i - 1} U_i = U_{i - 1} \qquad \text{and} \qquad V_i / t^{i - 1} V_i = V_{i - 1}.
\]

\item
If $n < \infty$ and $A_n := g_{n - 1} \ast A$, then
\[
A_n / t^n A_n =
\begin{bmatrix}
U_n & t^{n - 1} Z
\\
0 & V_n
\end{bmatrix},
\]
for some $U_n \in \mod_\Lambda^{\bd_1} (R / t^n)$, $V_n \in \mod_\Lambda^{\bd_2} (R / t^n)$, and $Z \in \bbZ^{V, U} \setminus \bbB^{V, U}$, such that
\[
U_n / t^{n - 1} U_n = U_{n - 1} \qquad \text{and} \qquad V_n / t^{n - 1} V_n = V_{n - 1}.
\]

\end{enumerate}
\end{step}

\begin{proof}
We prove the claim by induction on $i$. We take $g_0 := \Id$, hence the claim follows for $i = 1$. Assume $n$ is not yet defined, $i > 1$, $g_0$, \ldots, $g_{i - 2}$ are already constructed, and $A_1$, \ldots, $A_{i - 1}$, defined as above, satisfy the above conditions. Then
\[
A_{i - 1} / t^i A_{i - 1} =
\begin{bmatrix}
U_i' & t^{i - 1} Z
\\
t^{i - 1} Z' & V_i'
\end{bmatrix},
\]
for $U_i' \in \mod_\Lambda^{\bd_1} (R / t^i)$, $V_i' \in \mod_\Lambda^{\bd_2} (R / t^i)$, $Z \in \bbA^{\bd_2, \bd_1} (k)$ and $Z' \in \bbA^{\bd_1, \bd_2} (k)$, such that
\[
U_i' / t^{i - 1} U_i' = U_{i - 1} \qquad \text{and} \qquad V_i' / t^{i - 1} V_i' = V_{i - 1}.
\]
If $\rho$ is a relation in $\Delta$, then one calculates that the lower-left coefficient of $(A_{i - 1} / t^i A_{i - 1})_\rho$ is $t^{i - 1} Z'^{U, V}_\rho$. Consequently, $Z' \in \bbZ^{U, V}$. Since $\Ext_\Lambda^1 (U, V) = 0$, $\bbZ^{U, V} = \bbB^{U, V}$, hence there exists $h' \in \bbV^{\bd_1, \bd_2} (k)$ such that
\[
Z' = h' \circ U - V \circ h'.
\]
We put
\[
g' :=
\begin{bmatrix}
\Id & 0
\\
- t^{i - 1} h' & \Id
\end{bmatrix}
\in \GL_\bd (R) .
\]
If $A' := g' \ast A_{i - 1}$, then
\[
A' / t^i A' =
\begin{bmatrix}
U_i' & t^{i - 1} Z
\\
0 & V_i'
\end{bmatrix}.
\]
Again $Z \in \bbZ^{V, U}$. If $Z \in \bbB^{V, U}$, then similarly as above we find $g'' \in \GL_\bd (R)$ such that the claim follows for $i$, provided $g_{i - 1} := g'' g' g_{i - 2}$, hence we may proceed by induction. Otherwise, we put $n := i$ and $g_{i - 1} := g' g_{i - 2}$, and finish the proof.
\end{proof}

If we are in the latter case, we obtain our claim.

\begin{step}
Assume there exists $g \in \GL_\bd (R)$ and $n \in \bbN_+$ such that, if $B := g \ast A$, then
\[
B / t B = U \oplus V \qquad \text{and} \qquad B / t^{n + 1} B =
\begin{bmatrix}
U' & t^n Z
\\
0 & V'
\end{bmatrix},
\]
for some $U' \in \mod_\Lambda^{\bd_1} (R / t^{n + 1})$, $V' \in \mod_\Lambda^{\bd_2} (R / t^{n + 1})$, and $Z \in \bbZ^{V, U} \setminus \bbB^{V, U}$. There there exists an exact sequence
\[
0 \to U \to M \to V \to 0
\]
with $M \in \calU$.
\end{step}

\begin{proof}
Put
\[
M :=
\begin{bmatrix}
U & Z
\\
0 & V
\end{bmatrix}.
\]
Then there exists an exact sequence
\[
0 \to U \to M \to V \to 0.
\]
In particular, $\calO_N \subseteq \ol{\calO}_M$ (see for example~\cite{Bongartz1996}*{Section~1.1}). Moreover, $M \not \simeq N$ (since $Z \not \in \bbB^{V, U}$), hence $\calO_N \neq \calO_M$.

Put $g' := \left[
\begin{smallmatrix}
\Id & 0
\\
0 & t^n \Id
\end{smallmatrix}
\right] \in \GL_\bd (K)$, where $K$ is the field of fractions of $R$, and $B' := g' \ast B$. Then $B' \in \mod_\Lambda^\bd (R)$ (although, $g' \not \in \GL_\bd (R)$) and $B' / t B' = M$. Hence, there exists a cofinite subset $\calC'$ of $\calC$ such that $c_0 \in \calC$ and $B'$ defines a regular map $\psi \colon \calC' \to \calZ$ (we use that $\calZ$ is $\GL_\bd (k)$-invariant and closed) with $\psi (c_0) = M$. In particular, $M \in \calZ$. Since $\calO_N$ is maximal in $\calZ \setminus \calU$, $M \in \calU$, and the claim follows.
\end{proof}

The following observation finishes the proof.

\begin{step}
There do not exist $g_i \in \GL_\bd (R)$, $i \in \bbN_+$, such that the following conditions are satisfies, where, for $i \in \bbN_+$, $A_i := g_i \ast A$:
\begin{enumerate}

\item
if $i \in \bbN_+$, then $A_i / t^i A_i = U_i \oplus V_i$, for some $U_i \in \mod_\Lambda^{\bd_1} (R / t^i)$ and $V_i \in \mod_\Lambda^{\bd_2} (R / t^i)$, and

\item
$U_1 = U$, $V_1 = V$, and, if $i \in \bbN_+$, then
\[
U_{i + 1} / t^{i + 1} U_{i + 1} = U_i \qquad \text{and} \qquad V_{i + 1} / t^{i + 1} V_{i + 1} = V_i.
\]

\end{enumerate}
\end{step}

\begin{proof}
Assume that such $g_i$, $i \in \bbN_+$, exist. We show this leads to a contradiction. Recall that $U_i$ and $V_i$ can be viewed as $\Lambda$-modules of dimension vectors $i \bd_1$ and $i \bd_2$, respectively. Moreover, for each $i \in \bbN_+$, we have an exact sequence
\begin{equation} \label{eq sequence}
0 \to V \to V_{i + 1} \to V_i \to 0.
\end{equation}

For each $i \in \bbN_+$, let $\varphi_i \colon \calC_i \to \mod_\Lambda^\bd (k)$ be the regular map defined by $A_i$, where $\calC_i$ is a cofinite subset of $\calC$ containing $c_0$. Then, for each $i \in \bbN_+$, $\varphi_i (c_0) = N$ and $\varphi_i^{-1} (\calU_0)$ is a cofinite subset of $\calC_i$.

If $W \in \mod_\Lambda^\bd (S)$, for a commutative ring $S$, then $\Hom_\Lambda (H, W)$ is the solution of the set of $p$ linear homogeneous equations
\[
W_\alpha f_{s \alpha} - f_{t \alpha} H_\alpha = 0, \qquad \alpha \in \Delta_1,
\]
with coefficients in $S$ and in $q$ indeterminates, which form an element $f$ of $\bbV^{\bh, \bd} (S)$, where
\[
p := \sum_{\alpha \in \Delta_1} h (s \alpha) d (t \alpha), \qquad q := \sum_{y \in \Delta_0} h (y) d (y),
\]
and $\bh := \bdim H$. If we associate with $W$ the matrix $\Phi (W)$ of this system, we obtain a morphism $\Phi \colon \mod_\Lambda^\bd \to \bbM_{p, q}$ of schemes. We treat $\bbM_{p, q}$ as the scheme of representations of the one arrow quiver (with no relations) of dimension vector $(p, q)$. The map $\Phi$ is equivariant in the following sense: there exists a morphism $\Psi \colon \GL_\bd \to \GL_{(p, q)}$ of algebraic groups such that
\[
\Phi (g \ast W) = \Psi (g) \ast \Phi (W),
\]
for all $g \in \GL_\bd$ and $W \in \mod_\Lambda^\bd$.

For $r \in [0, \min (p, q)]$, let $\calO_r$ be the $\GL_{(p, q)} (k)$-orbit in $\bbM_{p, q} (k)$ consisting of the matrices of rank $r$. The equation~\eqref{eq U0} implies that $\Phi (M) \in \calO_{q - d}$, if $M \in \calU_0$.

For $i \in \bbN_+$, let $\psi_i := \Phi \circ \varphi_i$ and $B_i := \Phi (A_i)$ be the corresponding element of $\bbM_{p, q} (R)$. Note that $B_i \simeq B := B_1$, for all $i \in \bbN_+$, since the map $\Phi$ is  equivariant. Moreover $\psi_i^{-1} (\calO_{q - d})$ is a cofinite subset of $\calC_i$, hence $B_i \otimes_R K \simeq L \otimes_k K$, where $L$ is a chosen element of $\calO_{q - d}$. Consequently, by results of~\cite{Zwara2000}*{(3.3), (3.5)}, there exists $l$ such that
\begin{equation} \label{eq isomorphism}
B / t^{j + 1} B \simeq (B / t^j B) \oplus L,
\end{equation}
for all $j \geq l$.

Fix $j \geq l$. We calculate $[H, N_j]_\Lambda$, where $N_j := A / t^j A$. Our assumptions imply $N_j \simeq A_j / t^j A_j = U_j \oplus V_j$. In particular, $[H, N_j]_\Lambda \geq [H, V_j]_\Lambda$. By applying $\Hom_\Lambda (N, -)$ to the sequence~\eqref{eq sequence}, we get the exact sequence
\[
0 \to \Hom_\Lambda (H, V) \to \Hom_\Lambda (H, V_{j + 1}) \to \Hom_\Lambda (H, V_j) \to 0,
\]
since $\Ext_\Lambda^1 (H, V) = 0$. Consequently, by easy induction
\begin{equation} \label{eq dimension}
[H, N_j]_\Lambda \geq [H, V_j]_\Lambda = j \cdot [H, V]_\Lambda \geq j d + j,
\end{equation}
since $[H, V]_\Lambda > d$ by assumption~\eqref{eq assumption}. On the other hand,
\[
[H, N_j] = q j - \rank_k (B / t^j B),
\]
where $\rank_k (B / t^j B)$ denotes the $k$-dimension of the image of the map $(R / t^j)^q \to (R / t^j)^p$ determined by $B / t^j B$. The equation~\eqref{eq isomorphism} implies,
\[
B / t^j B \simeq (B / t^l B) \oplus L^{j - l}.
\]
Since $L \in \calO_{q - d}$,
\begin{multline*}
[H, N_j]_\Lambda = q j - \rank_k (B / t^l B) - (q - d) (j - l)
\\
= q l - \rank_k (B / t^l B) + j d - l d = j d + ([H, N_l]_\Lambda - l d).
\end{multline*}
If $j > [H, N_l]_\Lambda - l d$, we get a contradiction with~\eqref{eq dimension}.
\end{proof}

\section{Main result} \label{section main result}

Now we describe the setup in which we apply results of the previous two sections.

Let $\Lambda$ be an algebra. Assume we are given full subcategories $\calL$ and $\calR$ of $\mod \Lambda$ such that the following conditions are satisfied:
\begin{enumerate}

\item
$\calL$ and $\calR$ are closed under direct sums and direct summands;

\item
if $M \in \mod \Lambda$, then there exist $U \in \calL$ and $V \in \calR$ with $M \simeq U \oplus V$;

\item
$\pdim_\Lambda U \leq 1$, for each $U \in \calL$, and $\idim_\Lambda V \leq 1$, for each $V \in \calR$;

\item
$\Hom_\Lambda (V, U) = 0 = \Ext_\Lambda^1 (U, V)$, for all $U \in \calL$ and $V \in \calR$;

\item
if $\bd$ is a dimension vectors, then
\begin{gather*}
\calL (\bd) := \{ M \in \mod_\Lambda^\bd (k) : \text{$M \in \calL$} \}
\\
\intertext{and}
\calR (\bd) := \{ M \in \mod_\Lambda^\bd (k) : \text{$M \in \calR$} \}
\end{gather*}
are open and irreducible (if nonempty) subsets of $\mod_\Lambda^\bd (k)$.
\end{enumerate}
We call such a pair of subcategories of $\mod \Lambda$ a geometric bisection.

Observe that in the above situation, if $\bd$ is a dimension vector, then $\mod_\Lambda^\bd (k)$ is the disjoint union of the sets $\calL (\bd_1) \oplus \calR (\bd_2)$, for dimension vectors $\bd_1$ and $\bd_2$ such that $\bd_1 + \bd_2 = \bd$. Here and in the sequel, for subsets $\calZ_1$ and $\calZ_2$ of $\mod_\Lambda^{\bd_1} (k)$ and $\mod_\Lambda^{\bd_2} (k)$, respectively, we put
\begin{multline*}
\calZ_1 \oplus \calZ_2 := \{ M \in \mod_\Lambda^{\bd_1 + \bd_2} (k) :
\\
\text{$M \simeq M_1 \oplus M_2$ for some $M_1 \in \calZ_1$ and $M_2 \in \calZ_2$} \}.
\end{multline*}
We stress that in general $\calZ_1 \oplus \calZ_2$ differs from the set
\begin{multline*}
\calZ_1 \times \calZ_2 := \{ M \in \mod_\Lambda^{\bd_1 + \bd_2} (k) : \text{$M = M_1 \oplus M_2$}
\\
\text{for some $M_1 \in \calZ_1$ and $M_2 \in \calZ_2$} \} \subseteq
\begin{bmatrix}
\mod_\Lambda^{\bd_1} (k) & 0
\\
0 & \mod_\Lambda^{\bd_2} (k)
\end{bmatrix}.
\end{multline*}

In addition to the above we assume the following. There exist $\Lambda$-modules $H_x$, $x \in \bbX$, where $\bbX$ is an index set, such that the following conditions are satisfied:
\begin{enumerate}

\item
if $\bd$ is a dimension vector and
\[
\calU_x (\bd) := \{ M \in \mod_\Lambda^{\bd} (k) : \text{$\Hom_\Lambda (H_x, M) = 0$} \}  \qquad (x \in \bbX),
\]
then
\begin{equation} \label{eq union}
\calL (\bd) = \bigcup_{x \in \bbX} \calU_x (\bd);
\end{equation}

\item
if $\bd$ is a dimension vector and
\[
\calU_x' (\bd) := \{ M \in \mod_\Lambda^{\bd} (k) : \text{$\Ext_\Lambda^1 (H_x, M) = 0$} \}  \qquad (x \in \bbX),
\]
then
\begin{equation} \label{eq intersection}
\calR (\bd) = \bigcap_{x \in \bbX} \calU_x' (\bd).
\end{equation}

\end{enumerate}
In the above situation we say that the geometric bisection $(\calL, \calR)$ is determined by the modules $H_x$, $x \in \bbX$.

The following lemma is a direct consequence of Proposition~\ref{prop main}.

\begin{lemma} \label{lemma sequence}
Let $\bd$ be a dimension vector such that $\calU := \calL (\bd) \neq \emptyset$. If $\calZ := \ol{\calU}$, $N \in \calZ \setminus \calU$, and $\calO_N$ is maximal in $\calZ \setminus \calU$, then there exists an exact sequence
\[
0 \to U \to M \to V \to 0
\]
such that $N \simeq U \oplus V$, $U \in \calL$, $V \in \calR$, and $M \in \calU$.
\end{lemma}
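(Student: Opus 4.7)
The plan is to deduce the lemma directly from Proposition~\ref{prop main} applied to a carefully chosen auxiliary module $H$; the bulk of the work is verifying the hypotheses, and no new geometric argument is needed.

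First I would set up the ambient data. By axiom~(5) of a geometric bisection, $\calU = \calL(\bd)$ is a nonempty open irreducible subset of $\mod_\Lambda^\bd(k)$, so $\calZ = \ol{\calU}$ is closed and irreducible and contains $\calU$ as a nonempty open subset, matching the global hypotheses of Proposition~\ref{prop main}. Axiom~(2) produces a decomposition $N \simeq U \oplus V$ with $U \in \calL$ and $V \in \calR$, and axiom~(4) immediately gives $\Ext_\Lambda^1(U, V) = 0$. Taking $U = V = W$ in axiom~(4) also forces $\Hom_\Lambda(W, W) = 0$ for any $W \in \calL \cap \calR$, hence $\calL \cap \calR = \{0\}$. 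Since $N \in \calZ \setminus \calU$ means $N \notin \calL(\bd) = \calL$, this shows $V \neq 0$.

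The key step is to produce a witness $H$ for the strict inequality~\eqref{eq assumption}. I would pick any $M_0 \in \calU$, use~\eqref{eq union} to find $y \in \bbX$ with $M_0 \in \calU_y(\bd)$, and set $H := H_y$. Then $[H, M_0]_\Lambda = 0$ forces $\min\{[H, M]_\Lambda : M \in \calZ\} = 0$, while $V \in \calR$ together with~\eqref{eq intersection} yields $\Ext_\Lambda^1(H, V) = 0$. What remains is the inequality $[H, V]_\Lambda > 0$. If instead $[H, V]_\Lambda = 0$, then $V \in \calU_y(\bd_2) \subseteq \calL(\bd_2) = \calL$ by~\eqref{eq union}, so $V \in \calL \cap \calR = \{0\}$, contradicting $V \neq 0$.

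With $\calZ$, $\calU$, the decomposition $N = U \oplus V$, and the module $H$ in hand, all hypotheses of Proposition~\ref{prop main} are satisfied, so the proposition delivers the desired short exact sequence $0 \to U \to M \to V \to 0$ with $M \in \calU$. I do not anticipate a serious obstacle: the only subtle point is the selection of $H$, and the family $(H_x)_{x \in \bbX}$ is precisely the data packaged into the definition of a geometric bisection in order to enable exactly this kind of choice.
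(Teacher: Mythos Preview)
Your proof is correct and follows essentially the same route as the paper's: decompose $N \simeq U \oplus V$ using the bisection, pick $H = H_y$ for some $y \in \bbX$ witnessing $\min\{[H,M]_\Lambda : M \in \calZ\} = 0$ via~\eqref{eq union}, use~\eqref{eq intersection} for $\Ext^1_\Lambda(H,V)=0$, and argue $[H,V]_\Lambda > 0$ because otherwise $V \in \calL \cap \calR = \{0\}$; then invoke Proposition~\ref{prop main}. The only difference is cosmetic: you spell out the $\calL \cap \calR = \{0\}$ step explicitly via $\Hom_\Lambda(W,W)=0$, whereas the paper leaves it implicit.
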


\begin{proof}
There exist $U \in \calL$ and $V \in \calR$ such that $N \simeq U \oplus V$. Without loss of generality we may assume $N = U \oplus V$. Condition~\eqref{eq union} implies that there exists $x \in \bbX$ such that
\[
\min \{ [H_x, M]_\Lambda : \text{$M \in \calZ$} \} = 0.
\]
Put $H := H_x$. Note that $\bdim V \neq 0$, since $N \not \in \calL (\bd)$. Consequently, conditions~\eqref{eq union} and~\eqref{eq intersection} imply
\[
[H, V]_\Lambda > 0 = \min \{ [H, M]_\Lambda : \text{$M \in \calZ$} \}
\]
and $[H, V]_\Lambda^1 = 0$. Now the claim follows from Proposition~\ref{prop main}.
\end{proof}

The following theorem will imply Theorem~\ref{main theorem prim}.

\begin{theorem} \label{theorem main one prim}
Let $\bd$ be a dimension vector such that $\calU := \calL (\bd) \neq \emptyset$. If $N \in \ol{\calU}$, then there exists $M \in \calU$ such that $N \in \ol{\calO}_M$. In other words,
\[
\ol{\calU} = \bigcup_{M \in \calU} \ol{\calO}_M.
\]
\end{theorem}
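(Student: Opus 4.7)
The plan is to deduce the theorem from Lemma~\ref{lemma sequence} via a short orbit-closure argument. The inclusion $\bigcup_{M \in \calU} \ol{\calO}_M \subseteq \ol{\calU}$ is immediate because each orbit $\calO_M$ with $M \in \calU$ lies inside $\calU$, so the content is the other direction: for every $N \in \ol{\calU}$ I must exhibit an $M \in \calU$ with $N \in \ol{\calO}_M$. If $N$ already belongs to $\calU$, I take $M := N$; the interesting case is $N \in \calZ \setminus \calU$, where $\calZ := \ol{\calU}$.

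The key idea is to pass from $N$ to a well-chosen $N' \in \calZ \setminus \calU$ whose orbit is maximal in $\calZ \setminus \calU$, so that Lemma~\ref{lemma sequence} applies directly. Concretely I would consider
\[
\mathcal{S} := \{ N'' \in \calZ \setminus \calU : N \in \ol{\calO}_{N''} \},
\]
which is nonempty because it contains $N$ itself, and then pick $N' \in \mathcal{S}$ with $\dim \calO_{N'}$ maximal; such an $N'$ exists since orbit dimensions inside $\calZ$ are bounded by $\dim \calZ < \infty$.

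Next I would verify that $\calO_{N'}$ is automatically maximal in $\calZ \setminus \calU$ in the precise sense required by Lemma~\ref{lemma sequence}. If some $N'' \in \calZ \setminus \calU$ had $\calO_{N'} \subseteq \ol{\calO}_{N''}$ with $\calO_{N'} \neq \calO_{N''}$, then the chain $N \in \ol{\calO}_{N'} \subseteq \ol{\calO}_{N''}$ would place $N''$ in $\mathcal{S}$, while local closedness of orbits would force $\dim \calO_{N''} > \dim \calO_{N'}$, contradicting the choice of $N'$.

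With this maximality in hand, Lemma~\ref{lemma sequence} furnishes $U \in \calL$ and $V \in \calR$ with $N' \simeq U \oplus V$, together with a short exact sequence $0 \to U \to M \to V \to 0$ where $M \in \calU$; the standard degeneration fact recalled in the proof of Proposition~\ref{prop main} then gives $\calO_{N'} \subseteq \ol{\calO}_M$, hence $N \in \ol{\calO}_{N'} \subseteq \ol{\calO}_M$, which is exactly what is required. I do not foresee any serious obstacle here, as all of the representation-theoretic substance has already been packaged into Lemma~\ref{lemma sequence}, and what remains is a purely geometric noetherian argument about orbit closures.
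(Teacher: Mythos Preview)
Your proposal is correct and follows essentially the same approach as the paper's proof: reduce to the case where the orbit is maximal in $\calZ \setminus \calU$, invoke Lemma~\ref{lemma sequence} to obtain the short exact sequence $0 \to U \to M \to V \to 0$ with $M \in \calU$, and conclude via the standard degeneration $\calO_{U \oplus V} \subseteq \ol{\calO}_M$. The only difference is cosmetic: the paper compresses your dimension-maximality argument into the single phrase ``we may assume that $\calO_N$ is maximal in $\calZ \setminus \calU$,'' whereas you spell out the passage from $N$ to $N'$ explicitly.
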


\begin{proof}
Put $\calZ := \ol{\calU}$. We may assume that $\calO_N$ is maximal in $\calZ \setminus \calU$. By Lemma~\ref{lemma sequence} there exists an exact sequence
\[
0 \to U \to M \to V \to 0
\]
such that $N \simeq U \oplus V$ and $M \in \calU$. Then $\calO_N \subseteq \ol{\calO}_M$, hence the claim follows.
\end{proof}

For the proof of the next result we need the following fact. In the proof of this proposition we only use that $(\calL, \calR)$ is a geometric bisection (not necessarily determined by modules $H_x$, $x \in \bbX$). Consequently, it also has its dual version for $\calU = \calR (\bd)$.

\begin{proposition} \label{prop setV}
Let $\bd$ be a dimension vector such that $\calU := \calL (\bd) \neq \emptyset$. Let $\calZ := \ol{\calU}$, $\calZ'$ be an irreducible component of $\calZ \setminus \calU$, and $\bd_1$ and $\bd_2$ be dimension vectors such that $\calZ' \cap (\calL (\bd_1) \oplus \calR (\bd_2))$ is dense in $\calZ'$. Then there exist $U \in \calL (\bd_1)$, $V \in \calR (\bd_2)$, and an open subset $\calV$ of $\calZ \cap \calE_\Lambda^{\bd_2, \bd_1} (k)$ such that, if $N := U \oplus V$, then $N \in \calZ'$, $\calO_N$ is maximal in $\calZ \setminus \calU$, $N \in \calV$, and $\ext (\calV) = [V, U]_\Lambda^1$.
\end{proposition}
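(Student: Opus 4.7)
The plan is to set $\calW := \calZ \cap \calE_\Lambda^{\bd_2, \bd_1}(k)$ and consider the function $f(E) := [V', U']_\Lambda^1$ for $E \in \calW$ with $(U', V') := \pi(E)$, where $\pi$ is the projection $\calE_\Lambda^{\bd_2, \bd_1}(k) \to \mod_\Lambda^{\bd_1}(k) \times \mod_\Lambda^{\bd_2}(k)$. Upper semicontinuity of $\dim_k \Ext_\Lambda^1$ on $\mod_\Lambda^{\bd_1}(k) \times \mod_\Lambda^{\bd_2}(k)$ makes $f$ upper semicontinuous on $\calW$. I will choose $N = U \oplus V$ as a sufficiently generic point of $\calZ'$ so that $f$ attains its minimum value at $N$ on every irreducible component of $\calW$ that contains $N$; then $\calV$ is produced from $\calW$ by excising both the components that miss $N$ and the non-generic strata of the remaining ones.

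\textbf{Choice of $N$.} Decompose $\calW = \calW_1 \cup \cdots \cup \calW_r$ into irreducible components, let $e_j := \min_{\calW_j} f$ (attained on a dense open subset $\calW_j^\circ \subseteq \calW_j$), and set
\[
\calW^{\text{good}} := \calW \setminus \bigcup_j (\calW_j \setminus \calW_j^\circ),
\]
which is open and dense in $\calW$. Inside $\calZ'$, the conditions (a) $\dim \calO_M$ is maximal on $\calZ'$; (b) $M$ belongs to no irreducible component of $\calZ \setminus \calU$ other than $\calZ'$; and (c) $M \in \calL(\bd_1) \oplus \calR(\bd_2)$ are each dense ((a), (b) are open dense by standard semicontinuity, and (c) is dense by hypothesis). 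Conditions (a) and (b) together imply that $\calO_M$ is maximal in $\calZ \setminus \calU$: any orbit $\calO_{M'}$ in $\calZ \setminus \calU$ with $\calO_M \subsetneq \ol{\calO_{M'}}$ would force $M \in \ol{\calO_{M'}}$ to lie in a component of $\calZ \setminus \calU$ distinct from $\calZ'$ contradicting (b), or in $\calZ'$ with $\dim \calO_{M'} > \dim \calO_M$ contradicting (a). For $M$ satisfying (a)--(c), write $M \simeq U \oplus V$ with $U \in \calL(\bd_1), V \in \calR(\bd_2)$ and apply a $\GL_\bd$-translate to put $M$ in the block-diagonal form $N := \left[\begin{smallmatrix} U & 0 \\ 0 & V \end{smallmatrix}\right] \in \calW$. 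A further density argument (see the obstacle below) will allow me to arrange in addition that $N \in \calW^{\text{good}}$.

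\textbf{Construction of $\calV$.} With $N$ chosen as above, set $c := [V, U]_\Lambda^1 = f(N)$ and $J := \{j : N \in \calW_j\}$. Since $N \in \calW^{\text{good}}$, for every $j \in J$ we have $N \in \calW_j^\circ$, hence $e_j = f(N) = c$. Define
\[
\calV := \calW \setminus \biggl( \bigcup_{j \notin J} \calW_j \cup \bigcup_{j \in J} (\calW_j \setminus \calW_j^\circ) \biggr),
\]
which is open in $\calW$ (complement of a finite union of closed subsets) and contains $N$ by construction. For any $E \in \calV$ and any $j$ with $E \in \calW_j$, the first exclusion forces $j \in J$ and the second yields $E \in \calW_j^\circ$, so $f(E) = e_j = c$. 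Therefore $\ext(\calV) = c = [V, U]_\Lambda^1$, completing the proof.

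\textbf{Main obstacle.} The subtlest step is the last density claim in Choice of $N$: translating the module-theoretic genericity conditions (a)--(c) in $\calZ'$ into the scheme-theoretic condition $N \in \calW^{\text{good}}$ in $\calW$. I expect to handle this by viewing the block-diagonal lifts of modules in $\calZ'$ satisfying (a)--(c) as a constructible subset $L$ of $\calW$ whose $\GL_\bd$-saturation is dense in $\calZ'$; since $\calW \setminus \calW^{\text{good}}$ is a proper closed subset of $\calW$ and $L$ is not contained in it (otherwise $\GL_\bd \cdot L$ would be contained in a $\GL_\bd$-invariant proper closed subset of $\calZ$, contradicting density in $\calZ'$), intersecting with $\calW^{\text{good}}$ produces an $N$ with all the required properties.
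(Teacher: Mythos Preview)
Your strategy is sound up to the ``main obstacle,'' but the density argument you sketch there does not go through, and this is exactly the heart of the proposition.

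The claimed contradiction is a non-sequitur. You argue that if $L \subseteq \calW \setminus \calW^{\text{good}}$, then $\GL_\bd \cdot L$ would lie in a $\GL_\bd$-invariant proper closed subset of $\calZ$, contradicting density in $\calZ'$. But $\calZ'$ is \emph{itself} a $\GL_\bd$-invariant proper closed subset of $\calZ$, so there is nothing to contradict. More to the point, there is no reason the $\GL_\bd$-saturation of $\calW \setminus \calW^{\text{good}}$ should miss any part of $\calZ'$. The block-diagonal locus $L$ you consider sits inside $\calZ \setminus \calU$, whereas a component $\calW_j$ of $\calW$ through $N$ typically contains points $E = \left[\begin{smallmatrix} U' & Z \\ 0 & V' \end{smallmatrix}\right]$ whose underlying module lies in $\calU$ (this is exactly what Lemma~\ref{lemma sequence} produces). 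Nothing prevents $f$ from achieving its minimum $e_j$ on $\calW_j$ at such points, with $e_j$ strictly smaller than $f(N')$ for every $N' \in L$. In that case $L \subseteq \calW_j \setminus \calW_j^\circ$ for this $j$, and no amount of genericity within $\calZ'$ will move $N$ into $\calW^{\text{good}}$.

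What the paper does instead is to build the link between $\calW_j$ and $\calZ'$ by hand. One first observes that for $N' \simeq U' \oplus V'$ with $U' \in \calL(\bd_1)$, $V' \in \calR(\bd_2)$ one has $[N',N']_\Lambda^2 = \langle \bd_2,\bd_1\rangle_\Lambda + [V',U']_\Lambda^1$, and then chooses $N$ in $\calZ'$ so as to \emph{minimize} $[N,N]_\Lambda^2$ (in addition to your conditions (a)--(c)). The set $\calV$ is taken to be $\calZ \cap \calE'$ with the components of $\calW$ avoiding $N$ removed, where $\calE'$ is the open locus of $\calE_\Lambda^{\bd_2,\bd_1}(k)$ with $U' \in \calL(\bd_1)$, $V' \in \calR(\bd_2)$. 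If some $E \in \calV$ had $f(E) < e$, one connects $E$ to $N$ by a curve inside the relevant component of $\calV$, and then replaces this curve by its projection $\psi$ to the block-diagonal locus (using the $\lambda$-scaling $\left[\begin{smallmatrix} \lambda\, \Id & 0 \\ 0 & \Id \end{smallmatrix}\right]$ and closedness). The image of $\psi$ lies in $\calZ \setminus \calU$, is irreducible, and passes through $N$; since $\calZ'$ is the unique component of $\calZ \setminus \calU$ containing $N$, the image lies in $\calZ'$. This yields a point of $\calZ'$ with $[N',N']_\Lambda^2 < [N,N]_\Lambda^2$, contradicting the choice of $N$. That curve-and-projection step is precisely the missing idea in your sketch.
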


\begin{proof}
We first make an auxiliary observation. Let $N' \simeq U' \oplus V'$, for $U' \in \calL (\bd_1)$ and $V' \in \calR (\bd_2)$. Then
\begin{equation} \label{eq ext2}
[N', N']_\Lambda^2 = [V', U']_\Lambda^2 = \langle \bd_2, \bd_1 \rangle + [V', U']_\Lambda^1.
\end{equation}

For the rest of the proof we fix $U \in \calL (\bd_1)$ and $V \in \calR (\bd_2)$ such that, if $N := U \oplus V$, then $N \in \calZ'$, $N$ does not belong to an irreducible component of $\calZ \setminus \calU$ different from $\calZ'$, $\calO_N$ is maximal in $\calZ \setminus \calU$, and
\[
[N, N]_\Lambda^2 = \min \{ [N', N']_\Lambda^2 : \text{$N' \in \calZ'$} \}.
\]
Put $e := [V, U]_\Lambda^1$. Then~\eqref{eq ext2} implies that
\begin{equation} \label{eq ext2 bound}
[N', N']_\Lambda^2 \geq \langle \bd_2, \bd_1 \rangle + e,
\end{equation}
for each $N' \in \calZ'$.

Let $\calE'$ be the open subset of $\calE_\Lambda^{\bd_2, \bd_1} (k)$ consisting of $\left[
\begin{smallmatrix}
U' & Z
\\
0 & V'
\end{smallmatrix}
\right]$ such that $U' \in \calL (\bd_1)$ and $V' \in \calR (\bd_2)$. Then, $\calZ \cap \calE'$ is an open subset of $\calZ \cap \calE_\Lambda^{\bd_2, \bd_1} (k)$ containing $N$. Let $\calV$ be the subset of $\calZ \cap \calE'$ obtained by subtracting all components of $\calZ \cap \calE_\Lambda^{\bd_2, \bd_1} (k)$, which do not contain $N$. In particular, $N \in \calV$ and $\calV$ is an open subset of $\calZ \cap \calE_\Lambda^{\bd_2, \bd_1} (k)$. We show that $\ext (\calV) = e$, and this will finish the proof.

Assume this is not the case. Let $\calU'$ be the set of $\left[
\begin{smallmatrix}
U' & Z
\\
0 & V'
\end{smallmatrix}
\right]$ in $\calV$ such that $[V', U']_\Lambda^1 < e$. Then there exists an irreducible component $\calV_0$ of $\calV$ such that $\calU_0 := \calV_0 \cap \calU' \neq \emptyset$. Then $\calU_0$ is an open subset of $\calV_0$ and $N \in \calV_0 \subseteq \ol{\calU_0}$, hence there exists an irreducible curve $\calC$ and a regular map $\varphi \colon \calC \to \calV_0$ such that $\varphi (c_0) = N$, for some $c_0 \in \calC$, and $\varphi (\calC) \cap \calU_0 \neq \emptyset$. Write $\varphi = \left[
\begin{smallmatrix}
\varphi_{1, 1} & \varphi_{2, 1}
\\
0 & \varphi_{2, 2}
\end{smallmatrix}
\right]$ (in particular, $\varphi_{1, 1} \colon \calC \to \calL (\bd_1)$ and $\varphi_{2, 2} \colon \calC \to \calR (\bd_2)$) and put
\[
\psi :=
\begin{bmatrix}
\varphi_{1, 1} & 0
\\
0 & \varphi_{2, 2}
\end{bmatrix}
\colon \calC \to \calL (\bd_1) \times \calR (\bd_2).
\]

Obviously $\psi (c_0) = N$. Moreover, $\psi (\calC) \cap \calU_0 \neq \emptyset$. Indeed, fix $c \in \calC$ such that $\varphi (c) = \left[
\begin{smallmatrix}
U' & Z
\\
0 & V'
\end{smallmatrix}
\right] \in \calU_0$. Then $\psi (c) = \left[
\begin{smallmatrix}
U' & 0
\\
0 & V'
\end{smallmatrix}
\right] \in \calU'$. Moreover,
\[
\begin{bmatrix}
U' & \lambda Z
\\
0 & V'
\end{bmatrix} =
\begin{bmatrix}
\lambda \Id & 0
\\
0 & \Id
\end{bmatrix}
\ast
\begin{bmatrix}
U' & Z
\\
0 & V'
\end{bmatrix} \in \calU_0 \subseteq \calV_0,
\]
for each $\lambda \in k \setminus \{ 0 \}$, since $\calU_0$ is invariant under the action of $\GL_{\bd_1} (k) \times \GL_{\bd_2} (k)$. Since $\calV_0$ is closed, $\psi (c) \in \calV_0 \cap \calU' = \calU_0$. Using that $\calZ$ is $\GL_\bd (k)$-invariant and closed, we show similarly that $\psi (\calC) \subseteq \calZ$. Additionally, $\psi (\calC) \cap \calU = \emptyset$, hence $\psi (\calC) \subseteq \calZ \setminus \calU$. Since $\psi (\calC)$ is irreducible, $N \in \psi (\calC)$, and $\calZ'$ is the unique irreducible component of $\calZ \setminus \calU$ containing $N$, $\psi (\calC) \subseteq \calZ'$. Consequently, $\calZ' \cap \calU_0 \neq \emptyset$. However, if $N' \in \calZ \cap \calU_0$, then~\eqref{eq ext2} implies $[N', N']_\Lambda^2 < \langle \bd_2, \bd_1 \rangle_\Lambda + e$, which contradicts~\eqref{eq ext2 bound}.
\end{proof}

The following theorem is a more general version of Theorem~\ref{main theorem}.

\begin{theorem} \label{theorem main two prim}
Assume $\Lambda$ is triangular and $\gldim \Lambda \leq 2$. Let $\bd$ be a dimension vector such that $\calU := \calL (\bd) \neq \emptyset$. Then $\ol{\calU}$ is regular in codimension one.
\end{theorem}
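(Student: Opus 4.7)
The plan is to show $\calZ := \ol{\calU}$ is regular in codimension one by producing, on every codimension-one irreducible component $\calZ'$ of $\calZ \setminus \calU$, a nonsingular point of $\calZ$ (such components being the only potential obstructions). First I would verify that $\calZ$ is an irreducible component of $\mod_\Lambda^\bd(k)$, which follows from bisection property~(5) since $\calU = \calL(\bd)$ is open and irreducible there, and then that every $M \in \calU$ is a nonsingular point of $\calZ$: since $\pdim_\Lambda M \leq 1$ we have $[M,M]_\Lambda^2 = 0$, so $\chi_\Lambda(\bd) = [M,M]_\Lambda - [M,M]_\Lambda^1$, and combining~\eqref{eq ad} with~\eqref{eqdimZ} yields $a_\Lambda(\bd) = \dim_k T_M \mod_\Lambda^\bd$; then the sandwich $\dim \calZ \geq a_\Lambda(\bd) \geq \dim_k T_M \calZ \geq \dim \calZ$ forces equality. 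Hence the singular locus of $\calZ$ lies inside $\calZ \setminus \calU$, and regularity in codimension one reduces to finding one nonsingular point on each codimension-one component $\calZ'$ of $\calZ \setminus \calU$.

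Fix such a component $\calZ'$. By bisection property~(2) every $N \in \calZ \setminus \calU$ decomposes as $U \oplus V$ with $U \in \calL$, $V \in \calR$ (uniquely up to isomorphism by Krull--Schmidt); stratifying $\calZ \setminus \calU$ constructibly by the pair $(\bdim U, \bdim V)$ and using irreducibility of $\calZ'$, I obtain $\bd_1, \bd_2$ with $\bd_1 + \bd_2 = \bd$ for which $\calZ' \cap (\calL(\bd_1) \oplus \calR(\bd_2))$ is dense in $\calZ'$. Proposition~\ref{prop setV} then supplies $U \in \calL(\bd_1)$, $V \in \calR(\bd_2)$ with $N := U \oplus V \in \calZ'$ and $\calO_N$ maximal in $\calZ \setminus \calU$, together with an open $\calV \subseteq \calZ \cap \calE_\Lambda^{\bd_2,\bd_1}(k)$ containing $N$ satisfying $\ext(\calV) = [V,U]_\Lambda^1 =: e$. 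Maximality of $\calO_N$ lets me apply Lemma~\ref{lemma sequence} to produce a short exact sequence $0 \to U \to M \to V \to 0$ with $M \in \calU$, so in particular $\pdim_\Lambda M \leq 1$.

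The final step is to feed this data into Corollary~\ref{coro nonsingular} with $h := [V,U]_\Lambda$ and $e$ as above. Bisection property~(4) immediately yields $\Hom_\Lambda(V,U) = 0 = \Ext_\Lambda^1(U,V)$, so $h = 0$; property~(3) yields $\idim_\Lambda V \leq 1$. The delicate hypothesis --- the one I expect to require the most care --- is $\hom(\calV) = h$, since in general one cannot shrink $\calV$ to remove an open piece on which the generic value of $\Hom$ is strictly smaller than $[V,U]_\Lambda$. Here, however, the bisection structure rescues us: by the construction of $\calV$ inside the locus $\calE'$ appearing in the proof of Proposition~\ref{prop setV}, every point of $\pi(\calV)$ lies in $\calL(\bd_1) \times \calR(\bd_2)$, and property~(4) then forces $[V',U']_\Lambda = 0$ for all $(U',V') \in \pi(\calV)$, giving $\hom(\calV) = 0 = h$ for free. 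With all hypotheses of Corollary~\ref{coro nonsingular} now verified, it declares $N$ a nonsingular point of $\calZ$; since $N \in \calZ'$, the required nonsingular point on $\calZ'$ has been exhibited.
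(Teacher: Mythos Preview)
Your proposal is correct and follows essentially the same route as the paper: show every point of $\calU$ is nonsingular, then for each irreducible component $\calZ'$ of $\calZ \setminus \calU$ invoke Proposition~\ref{prop setV} and Lemma~\ref{lemma sequence} to produce the data required by Corollary~\ref{coro nonsingular}. The only differences are cosmetic: the paper handles the open part by applying Corollary~\ref{coro nonsingular} with $U = N$, $V = 0$ rather than by your direct tangent-space computation, and it leaves implicit both the stratification producing $\bd_1, \bd_2$ and the verification of $\hom(\calV) = h$ (which, as you observe, is automatic here since $h = 0$).
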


\begin{proof}
Let $\calZ := \ol{\calU}$. Assume first $N \in \calU$. Then $\pdim_\Lambda N \leq 1$. Consequently, we may apply Corollary~\ref{coro nonsingular} with $U = N$ and $V = 0$, and get that $N$ is a nonsingular point of $\calZ$.

Now let $\calZ'$ be an irreducible component of $\calZ \setminus \calU$. In order to finish the proof it is enough to find $N \in \calZ'$ such that $N$ is a nonsingular point of $\calZ$. From Proposition~\ref{prop setV} we know there exist $U \in \calL (\bd_1)$, $V \in \calR (\bd_2)$, and an open subset $\calV$ of $\calZ \cap \calE_\Lambda^{\bd_2, \bd_1} (k)$ such that, if $N := U \oplus V$, then $N \in \calZ'$, $\calO_N$ is maximal in $\calZ \setminus \calU$, $N \in \calV$, and $\ext (\calV) = [V, U]_\Lambda^1$. Moreover, by Lemma~\ref{lemma sequence} there exists an exact sequence
\[
0 \to U \to M \to V \to 0
\]
with $\pdim_\Lambda M \leq 1$, hence the claim follows from Corollary~\ref{coro nonsingular}.
\end{proof}

\section{Applications} \label{sect applications}

In this section we present applications of the results of Section~\ref{section main result}, which include Theorems~\ref{main theorem} and~\ref{main theorem prim}.

\subsection{Periodic modules over concealed canonical algebras}
Let $\Lambda$ be a concealed canonical algebra. Then $\Lambda$ is triangular and $\gldim \Lambda \leq 2$~\cite{LenzingMeltzer}. We describe a geometric bisection of $\mod \Lambda$. According to~\cites{LenzingMeltzer} there exists a sincere separating family $\calT = (\calT_x)_{x \in \bbP^1 (k)}$. Let $\bbX_0$ be the set of $x \in \bbP^1 (k)$, such that $\calT_x$ is homogeneous, and $\bbX \supseteq \bbX_0$ be a set indexing (the isomorphism classes of) the modules lying the mouths of the tubes $\calT_x$, $x \in \bbP^1 (k)$. Fix, for each $x \in \bbX$, the corresponding module $H_x$. Then there exists a dimension vector $\bh$ such that $\bdim H_x = \bh$, for each $x \in \bbX_0$. We put:
\begin{multline*}
\calL := \{ M \in \mod \Lambda : \text{$\langle \bh, X \rangle \leq 0$},
\\
\text{for each indecomposable direct $X$ summand of $M$} \},
\end{multline*}
and
\begin{multline*}
\calR := \{ M \in \mod \Lambda : \text{$\langle \bh, X \rangle > 0$},
\\
\text{for each indecomposable direct $X$ summand of $M$} \}.
\end{multline*}
It follows from the representation theory of concealed canonical algebras (we refer to~\cites{LenzingdelaPena, LenzingSkowronski}) and~\cite{Bobinski2008}*{Section~3}, that $(\calL, \calR)$ is a geometric bisection of $\mod \Lambda$ determined by $H_x$, $x \in \bbX$. Consequently, Theorems~\ref{theorem main one prim} and~\ref{theorem main two prim} give the following.

\begin{theorem} \label{theorem cancealed}
If $\Lambda$ is a concealed canonical algebra and $\bd$ is a dimension vector such that $\calU := \calL (\bd) \neq \emptyset$, then $\ol{\calU}$ is regular in codimension one and
\[
\ol{\calU} = \bigcup_{M \in \calU} \ol{\calO}_M.
\]
\end{theorem}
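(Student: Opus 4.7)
The plan is to deduce the theorem as a direct consequence of the two general results proved in Section~\ref{section main result}, namely Theorems~\ref{theorem main one prim} and~\ref{theorem main two prim}. What must be verified is that the pair $(\calL, \calR)$ introduced just above the statement is indeed a \emph{geometric bisection} of $\mod \Lambda$ in the sense defined there, and moreover that it is \emph{determined} by the family of modules $H_x$, $x \in \bbX$. Once this is established, Theorem~\ref{theorem main one prim} yields the orbit decomposition $\ol{\calU} = \bigcup_{M \in \calU} \ol{\calO}_M$, while Theorem~\ref{theorem main two prim} (applicable since $\Lambda$ is triangular of global dimension at most two by the cited result of Lenzing and Meltzer) yields regularity in codimension one.

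The verification splits into checking the five axioms of a geometric bisection and then the two extra conditions \eqref{eq union}, \eqref{eq intersection}. Closure of $\calL$ and $\calR$ under direct sums and summands is immediate from the defining inequalities on indecomposable summands. The splitting $M \simeq U \oplus V$ with $U \in \calL$ and $V \in \calR$ follows from the separating tubular family structure of $\calT$: every indecomposable $\Lambda$-module sits either in $\calT$, in the preprojective-like part on the left, or in the preinjective-like part on the right, and the sign of $\langle \bh, - \rangle$ on an indecomposable is controlled by which of these three regions it belongs to. The bounds $\pdim_\Lambda U \leq 1$ for $U \in \calL$ and $\idim_\Lambda V \leq 1$ for $V \in \calR$, together with the vanishing $\Hom_\Lambda(V,U) = 0 = \Ext^1_\Lambda(U,V)$, are standard properties of the separating tubular family (see the references to~\cites{LenzingdelaPena, LenzingSkowronski} cited in the paper). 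Openness and irreducibility of $\calL (\bd)$ and $\calR (\bd)$ within $\mod_\Lambda^\bd (k)$ are established in~\cite{Bobinski2008}*{Section~3}, which is the technical input here.

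For the second part, I would describe $\calL (\bd)$ as the union over $x \in \bbX$ of the loci where $\Hom_\Lambda (H_x, -)$ vanishes: an indecomposable $X$ satisfies $\langle \bh, \bdim X \rangle \leq 0$ precisely when some mouth module $H_x$ admits no nonzero map into $X$, giving \eqref{eq union}. Dually, $\calR (\bd)$ is the intersection of the vanishing loci for $\Ext^1_\Lambda (H_x, -)$, since for an indecomposable $X$ in the appropriate region one has $\Ext^1_\Lambda (H_x, X) = 0$ for every mouth module; this gives \eqref{eq intersection}. Both facts are extracted from the representation theory of $\calT$ and are again covered by the references above.

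The main obstacle will not be the final logical step but rather the bookkeeping around \eqref{eq union} and \eqref{eq intersection}, i.e.\ matching the inequality $\langle \bh, X \rangle \leq 0$ (respectively $>0$) characterising $\calL$ and $\calR$ against the desired $\Hom$- and $\Ext^1$-vanishing with the mouth modules $H_x$. Once this matching is in place, the conclusion is immediate: apply Theorem~\ref{theorem main one prim} to obtain $\ol{\calU} = \bigcup_{M \in \calU} \ol{\calO}_M$, and Theorem~\ref{theorem main two prim} to obtain regularity of $\ol{\calU}$ in codimension one, completing the proof.
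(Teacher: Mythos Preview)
Your proposal is correct and follows the same route as the paper: the theorem is deduced directly from Theorems~\ref{theorem main one prim} and~\ref{theorem main two prim} once one knows that $(\calL,\calR)$ is a geometric bisection of $\mod\Lambda$ determined by the mouth modules $H_x$, and that $\Lambda$ is triangular with $\gldim\Lambda\leq 2$. The paper simply asserts this verification by citing~\cites{LenzingdelaPena, LenzingSkowronski} and~\cite{Bobinski2008}*{Section~3}, whereas you spell out which reference covers which axiom; the logical structure is identical.
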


We explain now how the above implies Theorems~\ref{main theorem} and~\ref{main theorem prim}. If $\Lambda$ is tame, then Theorem~\ref{main theorem} follows from~\cite{BobinskiSkowronski}*{Theorem~1} and there is nothing to prove in Theorem~\ref{main theorem prim}. Thus assume $\Lambda$ is wild. In this case, let $\bd$ is a dimension vector and
\[
\calU := \{ M \in \mod_\Lambda^\bd (k) : \text{$M$ is $\tau$-periodic} \}.
\]
If $\calU \neq \emptyset$, then $\calU = \calL (\bd)$ (by the representation theory of concealed canonical algebras), thus Theorems~\ref{main theorem} and~\ref{main theorem prim} are direct consequences of Theorem~\ref{theorem cancealed}.

We make one more comment. Let $\Lambda$ be a wild concealed canonical algebra and $\calU$ as above. Assume $\calU \neq \emptyset$ and put $\calZ := \ol{\calU}$. Theorem~\ref{main theorem prim} implies that if $\calO_M$ is maximal in $\calZ$, then $M$ is $\tau$-periodic. The representation theory of concealed canonical algebras implies that $M$ is a direct sum of modules from $\calT$. Then it is standard that we may rewrite the description of maximal orbits in $\calZ$ from~\cite{BobinskiSkowronski}*{Proposition~5} (see also~\cite{Ringel1980}*{Theorem~3.5}).

\subsection{Directing modules} \label{sub directing}
A $\Lambda$-module $M$ is called directing if there is no sequence
\[
X_0 \to X_1 \to \cdots \to X_n
\]
of nonzero maps between indecomposable modules such that $X_0$ and $X_n$ are direct summands of $M$, and there exists $0 < i < n$, such that $\tau X_{i + 1} \simeq X_{i - 1}$. The following is the main result of~\cite{Bobinski2009}.

\begin{theorem} \label{theorem direct}
If $M$ is a directing, then $\ol{\calO}_M$ is regular in codimension one.
\end{theorem}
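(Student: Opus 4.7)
The plan is to derive the theorem from Theorem~\ref{theorem main two prim} by equipping (the support algebra of) $\Lambda$ with a geometric bisection whose left half $\calL(\bd)$ equals the orbit $\calO_M$, where $\bd := \bdim M$. First I would pass to the support algebra $\Lambda'$ of $M$: since $M$ is directing, $M$ has projective and injective dimension at most $1$ over $\Lambda'$, and classical tilting theory gives that $\Lambda'$ is triangular with $\gldim \Lambda' \leq 2$. Moreover, the set of $\Lambda$-modules of dimension vector $\bd$ with support contained in $\Lambda'$ is an open subscheme of $\mod_\Lambda^\bd(k)$ naturally identified with $\mod_{\Lambda'}^\bd(k)$, so regularity in codimension one of $\overline{\calO_M}$ inside the latter propagates back to the former. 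We may thus assume $\Lambda = \Lambda'$.

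Next I would construct the bisection. Writing $M = M_1 \oplus \cdots \oplus M_r$ with each $M_i$ indecomposable, the directing hypothesis produces a complete slice $\calS$ in the Auslander--Reiten quiver of $\Lambda$ containing every $M_i$. Using $\calS$, I would partition the indecomposable $\Lambda$-modules into two classes $\calL_{\mathrm{ind}}$ and $\calR_{\mathrm{ind}}$, with $M_1, \ldots, M_r \in \calL_{\mathrm{ind}}$, such that
\[
\Hom_\Lambda(V, U) = 0 = \Ext_\Lambda^1(U, V)
\qquad \text{for all } U \in \calL_{\mathrm{ind}},\; V \in \calR_{\mathrm{ind}},
\]
and such that $\pdim_\Lambda U \leq 1$ for $U \in \calL_{\mathrm{ind}}$ and $\idim_\Lambda V \leq 1$ for $V \in \calR_{\mathrm{ind}}$. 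Taking $\calL, \calR$ to be the additive closures and verifying openness and irreducibility of $\calL(\bd_1)$, $\calR(\bd_2)$ as in~\cite{Bobinski2008}*{Section~3} yields a geometric bisection. The slice is chosen tightly enough that the only decomposition of dimension vector $\bd$ with indecomposable summands in $\calL_{\mathrm{ind}}$ is $M_1 \oplus \cdots \oplus M_r$ itself, so $\calL(\bd) = \calO_M$ and $\overline{\calL(\bd)} = \overline{\calO_M}$.

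To realise the bisection through modules $H_x$, $x \in \bbX$, for each $V \in \calR_{\mathrm{ind}}$ I would choose an indecomposable $H_V$ adjacent to $\calS$ (a successor of the slice) satisfying $\Hom_\Lambda(H_V, V) \neq 0$, $\Ext_\Lambda^1(H_V, W) = 0$ for every $W \in \calR_{\mathrm{ind}}$, and $\Hom_\Lambda(H_V, U) = 0$ for every $U \in \calL_{\mathrm{ind}}$. With this data, $\calU := \calL(\bd)$ is nonempty (it equals $\calO_M$), and Theorem~\ref{theorem main two prim} delivers that $\overline{\calO_M} = \overline{\calU}$ is regular in codimension one.

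The main obstacle is carrying out the partition $(\calL_{\mathrm{ind}}, \calR_{\mathrm{ind}})$ and selecting the test modules $H_x$ uniformly for an arbitrary directing module: unlike in the concealed canonical setting, the Auslander--Reiten quiver of $\Lambda'$ has no preassigned tubular or postprojective/preinjective structure to exploit, and the component(s) containing the summands of $M$ must be cut in the correct place. The identification $\calL(\bd) = \calO_M$ is also a delicate matching problem, forcing $\calL_{\mathrm{ind}}$ to be chosen minimally around $M$ so that no other assembly of indecomposables from $\calL_{\mathrm{ind}}$ realises dimension vector $\bd$. This is presumably the point at which the correction to~\cite{Bobinski2009} mentioned in the introduction becomes relevant.
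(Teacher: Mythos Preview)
Your route differs from the paper's in a way that matters. The paper does \emph{not} invoke Theorem~\ref{theorem main two prim}; that theorem requires the geometric bisection to be determined by modules $H_x$, and the paper never produces such modules in the directing setting. Instead it quotes the structural analysis of~\cite{Bobinski2009} to reduce to a triangular $\Lambda$ with $\gldim\Lambda\le 2$ equipped with a plain geometric bisection $(\calL,\calR)$ satisfying $\ol{\calO}_M=\ol{\calR(\bdim M)}$ (note: the $\calR$ side, not $\calL$). The exact sequence $0\to U\to M\to V\to 0$, for $N\simeq U\oplus V$ with $\calO_N$ maximal in $\ol{\calO}_M\setminus\calO_M$, is then imported directly from~\cite{Bobinski2009}*{Corollary~4.4}. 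With the sequence already in hand, only the dual of Proposition~\ref{prop setV} and Corollary~\ref{coro nonsingular} are needed, and neither of these requires the $H_x$.

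This is exactly the point of the correction: Theorem~\ref{theorem main two prim} bundles Proposition~\ref{prop setV}, Lemma~\ref{lemma sequence}, and Corollary~\ref{coro nonsingular}, but Lemma~\ref{lemma sequence} is the only ingredient that depends on the $H_x$, and in the directing case its role is played by~\cite{Bobinski2009}*{Corollary~4.4}. Your obstacle --- manufacturing $H_x$ satisfying~\eqref{eq union} and~\eqref{eq intersection} --- is therefore bypassed rather than solved. As you have written them, your three conditions on $H_V$ are in any case insufficient for~\eqref{eq union}: that identity forces $\Hom_\Lambda(H_x,M)\neq 0$ for \emph{every} $x\in\bbX$ whenever $M$ has a summand $V_0\in\calR_{\mathrm{ind}}$, whereas your conditions only guarantee nonvanishing for the single index $x=V_0$; a symmetric gap occurs for~\eqref{eq intersection}.
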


The proof of Theorem~\ref{theorem direct} in~\cite{Bobinski2009} contains a gap. Roughly speaking, we consider the intersection of $\ol{\calO}_M$ with $\mod_\Lambda^{\bd_1} \times \mod_\Lambda^{\bd_2}$ (for suitable dimension vectors $\bd_1$ and $\bd_2$) and assume, without proving, it is a reduced scheme. Now we explain how to correct the proof.

The detailed analysis presented in~\cite{Bobinski2009} shows that we may assume $\Lambda$ is triangular, $\gldim \Lambda \leq 2$, we have a geometric bisection $(\calL, \calR)$ of $\mod \Lambda$, and $\ol{\calO}_M = \ol{\calR (\bdim M)}$. Moreover, \cite{Bobinski2009}*{Corollary~4.4} implies that if $\calO_N$ is maximal in $\ol{\calO}_M \setminus \calO_M$, then there exists an exact sequence
\[
0 \to U \to M \to V \to 0
\]
such that $N \simeq U \oplus V$, $\pdim_\Lambda M \leq 1$ and $\idim_\Lambda V \leq 1$. Consequently, Theorem~\ref{theorem direct} follows from the dual of Proposition~\ref{prop setV} and Corollary~\ref{coro nonsingular}.

\subsection{Irreducible components of module varieties for dimension vectors of indecomposable modules over tame quasi-titled algebras}
An algebra $\Lambda$ is called quasi-tilted if $\gldim \Lambda \leq 2$ and, for each indecomposable $\Lambda$-module $X$, $\pdim_\Lambda X \leq 1$ or $\idim_\Lambda X \leq 1$. The following theorem is the main result of~\cite{Bobinski2012}.

\begin{theorem}
Let $\Lambda$ be a tame quasi-tilted algebra and $\bd$ the dimension vector of an indecomposable $\Lambda$-module. If $\calZ$ is an irreducible component of $\mod_\Lambda^\bd (k)$, then $\calZ$ is nonsingular in codimension one.
\end{theorem}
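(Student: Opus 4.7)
The plan is to reduce the statement to Theorem~\ref{theorem direct} and Theorem~\ref{theorem cancealed}, which together cover the two sources of irreducible components of $\mod_\Lambda^\bd (k)$ when $\Lambda$ is a tame quasi-tilted algebra and $\bd$ is the dimension vector of an indecomposable. By the Happel--Reiten--Smal\o{} structure theory a tame quasi-tilted algebra is derived equivalent to a tame hereditary or a tame canonical algebra, and the detailed classification carried out in~\cite{Bobinski2012} shows that every such irreducible component $\calZ$ is of one of two types: either $\calZ = \ol{\calO_M}$ for some directing $\Lambda$-module $M$, or $\calZ = \ol{\calL (\bd)}$ (equivalently $\ol{\calR (\bd)}$) for a geometric bisection $(\calL, \calR)$ of a convex concealed canonical subcategory $\Lambda'$ of $\Lambda$ into which the support of $\bd$ is contained.

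For the first type, nonsingularity in codimension one is immediate from Theorem~\ref{theorem direct}, whose proof is itself completed in Subsection~\ref{sub directing} above by appealing to Proposition~\ref{prop setV} and Corollary~\ref{coro nonsingular}; in particular, the gap in~\cite{Bobinski2009} is now closed, so this case is definitively handled. For the second type, Theorem~\ref{theorem cancealed} (together with its dual obtained by applying Theorem~\ref{theorem main two prim} to the opposite bisection $(\calR, \calL)$, which is permitted by the remark preceding Proposition~\ref{prop setV}) yields exactly the claimed regularity in codimension one. The fact that $\Lambda$ is triangular with $\gldim \Lambda \leq 2$, required to invoke Corollary~\ref{coro nonsingular}, is automatic from the quasi-tilted hypothesis.

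The main obstacle is therefore not the new nonsingularity analysis but rather the careful verification of the \emph{reduction step}: one must show that whenever $\bd$ is the dimension vector of an indecomposable $\Lambda$-module over a tame quasi-tilted algebra, every irreducible component of $\mod_\Lambda^\bd (k)$ falls into exactly one of the two classes above. This rests on a case-by-case examination of how indecomposables of fixed dimension vector distribute among the postprojective, tubular, and preinjective components of the Auslander--Reiten quiver of $\Lambda$, which is precisely the analysis carried out in~\cite{Bobinski2012}. The subtle point is that the arguments there implicitly used reducedness of the scheme-theoretic intersection $\calZ \cap \calE_\Lambda^{\bd_2, \bd_1}$ at $N = U \oplus V$; this reducedness is exactly what Proposition~\ref{prop reduction} supplies, once the vanishing $\Ext_\Lambda^1 (U, V) = 0$ is read off from the geometric bisection axioms. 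Thus the reduction of~\cite{Bobinski2012} now runs cleanly, and combined with Lemma~\ref{lemma sequence} (which produces the required exact sequence $0 \to U \to M \to V \to 0$ with $\pdim_\Lambda M \leq 1$) and Corollary~\ref{coro nonsingular}, the theorem follows.
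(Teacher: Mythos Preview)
Your proposal is correct and follows essentially the same strategy as the paper: defer to the case analysis of~\cite{Bobinski2012} and repair the gap there with the geometric-bisection machinery developed in Sections~\ref{sect nonsingularity}--\ref{section main result}. The one difference worth noting is that you route the non-directing case through Theorem~\ref{theorem cancealed} by passing to a convex concealed canonical subalgebra $\Lambda'$, whereas the paper simply observes that in the situation where the gap occurs one already has a geometric bisection $(\calL,\calR)$ of $\mod\Lambda$ determined by a family of modules with $\calZ=\ol{\calL(\bd)}$, and then applies Theorem~\ref{theorem main two prim} directly to $\Lambda$ (which is triangular with $\gldim\Lambda\leq 2$ by the quasi-tilted hypothesis). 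The paper's route is shorter and avoids having to locate $\Lambda'$ or argue that regularity in codimension one transfers along the embedding; since Theorem~\ref{theorem main two prim} requires only the abstract bisection axioms, there is no need to specialize to the concealed canonical situation.
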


The proof of this theorem in~\cite{Bobinski2012} contains a gap similar to that described in Subsection~\ref{sub directing}. However, in the situation in which the gap occurs we have a geometric bisection $(\calL, \calR)$ determined by a family of modules such that $\calZ = \ol{\calL (\bd)}$. Since every quasi-tilted algebra is triangular~\cite{HappelReitenSmalo}*{Propostion~III.1.1}, we may use Theorem~\ref{theorem main two prim} instead of the original arguments.

\bibsection

\begin{biblist}

\bib{AssemSimsonSkowronski}{book}{
   author={Assem, I.},
   author={Simson, D.},
   author={Skowro{\'n}ski, A.},
   title={Elements of the Representation Theory of Associative Algebras. Vol. 1},
   series={London Math. Soc. Stud. Texts},
   volume={65},
   publisher={Cambridge Univ. Press, Cambridge},
   date={2006},
   pages={x+458},
}

\bib{BarotSchroer}{article}{
   author={Barot, M.},
   author={Schr{\"o}er, J.},
   title={Module varieties over canonical algebras},
   journal={J. Algebra},
   volume={246},
   date={2001},
   pages={175--192},
}

\bib{Bobinski2008}{article}{
   author={Bobi{\'n}ski, G.},
   title={Geometry of regular modules over canonical algebras},
   journal={Trans. Amer. Math. Soc.},
   volume={360},
   date={2008},
   number={2},
   pages={717--742},
}

\bib{Bobinski2009}{article}{
   author={Bobi{\'n}ski, G.},
   title={Orbit closures of directing modules are regular in codimension one},
   journal={J. Lond. Math. Soc. (2)},
   volume={79},
   date={2009},
   pages={211--224},
}

\bib{Bobinski2012}{article}{
   author={Bobi{\'n}ski, G.},
   title={On regularity in codimension one of irreducible components of module varieties},
   journal={Algebr. Represent. Theory},
   volume={15},
   date={2012},
   number={5},
   pages={921--931},
}

\bib{Bobinski2015}{article}{
   author={Bobi{\'n}ski, G.},
   title={Semi-invariants for concealed-canonical algebras},
   journal={J. Pure Appl. Algebra},
   volume={219},
   date={2015},
   number={1},
   pages={59--76},
}

\bib{BobinskiSkowronski}{article}{
   author={Bobi{\'n}ski, G.},
   author={Skowro{\'n}ski, A.},
   title={Geometry of periodic modules over tame concealed and tubular algebras},
   journal={Algebr. Represent. Theory},
   volume={5},
   date={2002},
   number={2},
   pages={187--200},
}

\bib{BobinskiZwara}{article}{
   author={Bobi{\'n}ski, G.},
   author={Zwara, G.},
   title={Normality of orbit closures for directing modules over tame algebras},
   journal={J. Algebra},
   volume={298},
   date={2006},
   number={1},
   pages={120--133},
}

\bib{Bongartz1983}{article}{
   author={Bongartz, K.},
   title={Algebras and quadratic forms},
   journal={J. London Math. Soc. (2)},
   volume={28},
   date={1983},
   number={3},
   pages={461--469},
}

\bib{Bongartz1996}{article}{
   author={Bongartz, K.},
   title={On degenerations and extensions of finite-dimensional modules},
   journal={Adv. Math.},
   volume={121},
   date={1996},
   number={2},
   pages={245--287},
}

\bib{CrawleyBoevey}{article}{
   author={Crawley-Boevey, W. W.},
   title={On tame algebras and bocses},
   journal={Proc. London Math. Soc. (3)},
   volume={56},
   date={1988},
   number={3},
   pages={451--483},
}

\bib{DomokosLenzing2000}{article}{
   author={Domokos, M.},
   author={Lenzing, H.},
   title={Moduli spaces for representations of concealed-canonical algebras},
   journal={J. Algebra},
   volume={251},
   date={2002},
   number={1},
   pages={371--394},
}

\bib{DomokosLenzing2002}{article}{
   author={Domokos, M.},
   author={Lenzing, H.},
   title={Invariant theory of canonical algebras},
   journal={J. Algebra},
   volume={228},
   date={2000},
   number={2},
   pages={738--762},
}

\bib{Drozd}{collection.article}{
   author={Drozd, Ju. A.},
   title={Tame and wild matrix problems},
   book={
      title={Representation theory, II},
      editor={Dlab, V.},
      editor={Gabriel, P.},
      series={Lecture Notes in Math.},
      volume={832},
      publisher={Springer, Berlin-New York},
     },
   date={1980},
   pages={242--258},
}

\bib{HappelReitenSmalo}{article}{
   author={Happel, D.},
   author={Reiten, I.},
   author={Smal{\o}, S. O.},
   title={Tilting in abelian categories and quasitilted algebras},
   journal={Mem. Amer. Math. Soc.},
   volume={120},
   date={1996},
   number={575},
   pages={viii+88},
}

\bib{LenzingMeltzer}{collection.article}{
   author={Lenzing, H.},
   author={Meltzer, H.},
   title={Tilting sheaves and concealed-canonical algebras},
   book={
      title={Representation Theory of Algebras},
      series={CMS Conf. Proc.},
      volume={18},
      publisher={Amer. Math. Soc., Providence, RI},
      editor={Bautista, R.},
      editor={Mart{\'{\i}}nez-Villa, R.},
      editor={de la Pe{\~n}a, J.A.},
      date={1996},
   },
   pages={455--473},
}

\bib{LenzingdelaPena}{article}{
   author={Lenzing, H.},
   author={de la Pe{\~n}a, J. A.},
   title={Concealed-canonical algebras and separating tubular families},
   journal={Proc. London Math. Soc. (3)},
   volume={78},
   date={1999},
   number={3},
   pages={513--540},
}

\bib{LenzingSkowronski}{article}{
   author={Lenzing, H.},
   author={Skowro{\'n}ski, A.},
   title={Quasi-tilted algebras of canonical type},
   journal={Colloq. Math.},
   volume={71},
   date={1996},
   number={2},
   pages={161--181},
}

\bib{Ringel1980}{article}{
   author={Ringel, C. M.},
   title={The rational invariants of the tame quivers},
   journal={Invent. Math.},
   volume={58},
   date={1980},
   number={3},
   pages={217--239},
}

\bib{Ringel1984}{book}{
   author={Ringel, C. M.},
   title={Tame Algebras and Integral Quadratic Forms},
   series={Lecture Notes in Math. },
   volume={1099},
   publisher={Springer, Berlin},
   date={1984},
   pages={xiii+376},
}

\bib{SkowronskiWeyman}{article}{
   author={Skowro{\'n}ski, A.},
   author={Weyman, J.},
   title={Semi-invariants of canonical algebras},
   journal={Manuscripta Math.},
   volume={100},
   date={1999},
   number={3},
   pages={391--403},
}

\bib{Slodowy}{book}{
   author={Slodowy, P.},
   title={Simple Singularities and Simple Algebraic Groups},
   series={Lecture Notes in Math.},
   volume={815},
   publisher={Springer, Berlin},
   date={1980},
   pages={x+175},
}

\bib{Voigt}{book}{
   author={Voigt, D.},
   title={Induzierte Darstellungen in der Theorie der endlichen, algebraischen Gruppen},
   series={Lecture Notes in Math.},
   volume={592},
   publisher={Springer, Berlin-New York},
   date={1977},
   pages={iv+413},
}

\bib{Zwara2000}{article}{
   author={Zwara, G.},
   title={Degenerations of finite-dimensional modules are given by extensions},
   journal={Compositio Math.},
   volume={121},
   date={2000},
   number={2},
   pages={205--218},
}

\bib{Zwara2011}{collection.article}{
   author={Zwara, G.},
   title={Singularities of orbit closures in module varieties},
   booktitle={Representations of Algebras and Related Topics},
   series={EMS Ser. Congr. Rep.},
   editor={Skowro{\'n}ski, A.},
   editor={Yamagata, K.},
   publisher={Eur. Math. Soc., Z\"urich},
   date={2011},
   pages={661--725},
}
		
\end{biblist}

\end{document}